\documentclass[12pt,reqno]{amsart}

\usepackage{graphicx}

\usepackage{subcaption} 
\usepackage{adjustbox}
\usepackage{tikz-cd}
\usepackage{hyperref}
\hypersetup{
	unicode=true,
	colorlinks=true,
	citecolor=black,
	linkcolor=black,
	urlcolor=black,
	plainpages=false,
 }


%
%
%
%


 \usepackage{url}	
 \allowdisplaybreaks 

\usepackage{pgf}

\usepackage{xcolor}

\usepackage{comment} 


\newtheorem{teo}{Theorem}[section]

\newtheorem{corollary}[teo]{Corollary}

\newtheorem{lemma}[teo]{Lemma}
\newtheorem{prop}[teo]{Proposition}

\theoremstyle{definition}
\newtheorem{defi}[teo]{Definition}
\newtheorem{example}[teo]{Example}

\theoremstyle{remark}
\newtheorem{rem}[teo]{Remark}

\numberwithin{figure}{section}


\newcommand{\Ba}{\mathcal{B}}

\newcommand{\bbar}{\overline}

\newcommand{\CC}{\mathcal{C}}

\newcommand{\Cart}{\mathrm{Cart}}

\newcommand{\co}{\colon}

\newcommand{\D}{\mathcal{D}}

\newcommand{\Dc}{\mathcal{D}}

\newcommand{\Si}{\mathcal{S}}

\newcommand{\Ho}{\mathrm{H}}

\newcommand{\E}{\mathcal{E}}

\newcommand{\F}{\mathcal{F}}

\newcommand{\id}{\mathrm{id}}

\newcommand{\N}{\mathrm{N}}

\newcommand{\Ima}{\mathrm{Im}}

\newcommand{\sC}{\mathrm{sc}}

\newcommand{\sG}{\mathrm{Sg}}

\newcommand{\op}{\mathrm{op}}

\newcommand{\opCart}{\mathrm{opCart}}

\newcommand{\Ab}{\textbf{\textsf{Ab}}}

\newcommand{\Uc}{\mathcal{U}}

\newcommand{\Vc}{\mathcal{V}}

\newcommand{\wtilde}{\widetilde}

\newcommand{\cpl}{\mathrm{c.p.l}}


\begin{document}

\title[Cohomology and \v{S}varc genus]{Baues-Wirsching cohomology and\\ \v{S}varc genus in small categories}


\author[I. Carcac\'ia]{%
	Isaac Carcac\'ia-Campos 
}

 \address{%
           	Isaac Carcac\'ia-Campos
            \\
              CITMAga,Departamento de Matem\'aticas, Universidade de Santiago de Compostela, 15782-SPAIN}
               \email{isaac.c.campos@usc.es}

\author[E. Mac\'ias]{%
	Enrique Mac\'ias-Virg\'os 
}

 \address{%
           	Enrique Mac\'ias-Virg\'os \\
              CITMAga, Departamento de Matem\'aticas, Universidade de Santiago de Compostela, 15782-SPAIN}
               \email{quique.macias@usc.es}

\author[D. Mosquera]{%
	David Mosquera-Lois 
}

 \address{%
           	David Mosquera-Lois \\
              Departamento de Matemáticas, Universidade de Vigo, 15782-SPAIN}
               \email{david.mosquera.lois@uvigo.gal}


\begin{abstract} We prove that for a bifibration 
$P$ between small categories, the length of the cup product in the kernel of the induced morphism ${P^*}$ in the Baues-Wirsching cohomology with coefficients in any natural system is a lower bound for the
\v{S}varc genus of $P$. Our results extend classical \v{S}varc-type inequalities to the categorical setting and introduce a computationally efficient method via a reduced cochain complex for Baues–Wirsching cohomology.
\end{abstract}

\keywords{Small category, Fibration, Baues-Wirsching Cohomology, \v{S}varc genus, sectional category, sectional number, cup length}



\thanks{We extend our gratitude to J. M. García-Calcines and P. Pavešić for bringing to our attention the existence of Baues-Wirsching cohomology. We also thank F. Muro for insightful discussions on the topic.}

\maketitle


\section*{Introduction}\label{sec: Prelim}

The \v{S}varc genus of a continuous map $f\colon X\to Y$ is a well-known homotopy invariant which, in an informal sense, measures how far $Y$ is from being dominated by $X$ through $f$. Formally, the {\em \v{S}varc genus} of $f$ is the least integer $n\geq 0$ such that there is a cover $\{U_0, \ldots, U_n \}$ of $Y$ by open subspaces that admit local homotopic sections. Specifically, for every $i \in \{0, \ldots, n\} $ there is a continuous map $s_i \colon U_i \rightarrow X$ that satisfies $f \circ s_i \simeq \iota_i$ where $\iota_i \colon U_i \rightarrow Y$ is the inclusion (see \cite{Svarc}). If we impose that the maps $s_i \colon U_i \rightarrow X$ are true sections (not only up to homotopy), then we define the sectional number of the map $f$. Both notions are equivalent for fibrations. The \v{S}varc genus of a map is not only significant in itself but also generalizes two other important notions: namely the LS-category (\cite{CLOT}) and the topological complexity (\cite{FARBER}). 

The study of topological invariants of small and acyclic categories has recently gained importance in different areas of research such as Computational Topology, Geometry, Combinatorics and simplicial methods. This importance stems, among other factors, from the suitability of such structures for developing algebraic combinatorial models that reflect interactions between different objects offering complexity and flexibility that are not achievable by simplicial complexes or preorders (see, for example, \cite{Bridson_Haefliger,KOZLOV}). 

The LS-category and the topological
complexity were studied in the context of small categories in \cite{TANAKA} and \cite{MAC-MOSQ-FUNCT}. In this work, we undertake the task of defining and studying the notions of sectional number and \v{S}varc genus in the context of small categories. 

In the context of topological spaces, the length of the cup product of the kernel of the induced morphism $p^*\colon \Ho(B)\to \Ho(E)$ in cohomology serves as a lower bound for the Švarc genus of a fibration $p\colon E\to B$ (see \cite{Svarc}). This inequality provides excellent lower bounds for computing the LS-category and the topological complexity of spaces. 

In the context of small categories, Baues and Wirsching introduced a cohomology theory with coefficients in a natural system $D$ endowed with an endopairing (\cite{BAUES0, BAUES1}). Our main result in this work is the following Švarc type inequality in the context of small categories for Baues-Wirsching cohomology: the length of the cup product in $\ker\Ho(P;D)$ is a lower bound for the \v{S}varc genus of $P$. Moreover, we develop a computational improvement for such cohomology by means of a \emph{reduced} cochain complex which greatly simplifies and facilitates the computations.

The paper is organized in two sections. In Section \ref{sec: COH} we further develop the notion of relative Baues-Wirsching cohomology \cite{Baues1989TheCO} and introduce the novel notion of relative cup product in order to define cup length in this context. Furthermore, we introduce an improvement in the computation of Baues-Wirsching cohomology by introducing a \emph{reduced} cochain complex. To improve readability and ensure the exposition is self-contained, we have included 
some preliminaries on Baues-Wirsching cohomology from scratch and a fully detailed example (Subsection \ref{subsec:example}).

Section \ref{sec:bifibrations_Svarc} deals with the following topic.  First, we prove some properties of bifibrations and we highlight a remarkable family of bifibrations: coverings of small categories. Second, we introduce the original notions of sectional number and  \v{S}varc genus of a functor and prove that both notions coincide for bifibrations (mirroring what happens in the topological context). Finally, we prove the main result of this work: Theorem \ref{thm:main_thm}. This result is a \v{S}varc inequality for bifibrations which claims that the length of the cup product of the bifibration (a purely algebraic invariant) is a lower bound for its \v{S}varc genus.

In this paper we will assume that all categories are small unless stated otherwise. Moreover, if $\CC$ is a category, we will denote by $\mathrm{Obj}(\CC)$ its set of objects, by $\mathrm{Mor}(\CC)$ the morphisms of $\CC$ and by $\CC(c_1,c_")$ the set of arrows between the objects $c_1,c_2$ in $\CC$.

\subsection*{Funding}The authors were partially supported by  Xunta de Galicia ED431C 2023/31 with FEDER funds and by MICINN-Spain research project PID2020-114474GB-I00.

\section{Cohomology of small categories} \label{sec: COH}

We introduce the notions of relative cohomology  with coefficients in a natural system and relative cup product for small categories (see \cite{Baues1989TheCO}). To this end, we first review the fundamentals of cohomology of small categories with coefficients in a natural system (see \cite{BAUES0,BAUES1}) and establish the notation.  

We encourage parallel reading of the fully worked out example (Subsection \ref{subsec:example}) along with this section. 

\subsection{The factorization category of a category}

Let $\CC$ be a small category. The category of factorizations of $\CC$, denoted by $\F \CC$, is the category whose objects are the morphisms of $\CC$ and whose morphisms from $\lambda \colon A \rightarrow B$ to $\mu \colon C \rightarrow D$ are the factorizations $(\alpha, \beta) \colon \lambda \rightarrow \mu$:
$$
\begin{tikzcd}
B \arrow[r, "\alpha"] & D                                   \\
A \arrow[u, "\lambda"]      & C \arrow[u, "\mu"] \arrow[l, "\beta"]
\end{tikzcd}$$
which make the diagram commutative, that is, $\mu=\alpha\circ \lambda \circ \beta$.
The composition of morphisms is defined as: $$(\alpha', \beta') \circ (\alpha, \beta)=(\alpha' \circ \alpha, \beta \circ \beta').$$



\begin{rem}\label{factorization}
Every morphism $(\alpha, \beta)$ in $\F \CC$ can be  factorized as $(\alpha, \beta)= (\id_D, \beta)\circ (\alpha,\id_A)$, as in the following diagram:
$$
\begin{tikzcd}
B \arrow[r, "\alpha"] & D \arrow[r, "\id_D"]                              & D                                   \\
A \arrow[u, "\lambda"]      & A \arrow[u, "\alpha \circ \lambda"'] \arrow[l, "\id_A"] & C \arrow[u, "\mu"] \arrow[l, "\beta"]
\end{tikzcd}
$$
\end{rem}

\begin{rem}\label{IDENT}
For any category $\CC$ a morphism $\alpha\colon A \rightarrow B$ induces two morphisms $(\alpha,\id_A)\colon\id_A \rightarrow \alpha$ and $(\id_B,\alpha)\colon\id_B \rightarrow \alpha$ in $\F \CC$, as in the following diagram:
$$
\begin{tikzcd}
A \arrow[r, "\alpha"] & B                                        & B \arrow[r, "\id_B"] & B                                         \\
A \arrow[u, "\id_A"]  & A \arrow[u, "\alpha"] \arrow[l, "\id_A"] & B \arrow[u, "\id_B"] & A \arrow[l, "\alpha"] \arrow[u, "\alpha"]
\end{tikzcd}
$$\end{rem}
\begin{rem}\label{ARROW}
    For every category $\CC$, we have the functor $\pi\colon\F \CC \rightarrow \CC$ defined as $\pi(\lambda)=\mathrm{Codom}(\lambda)$ and $\pi(\alpha,\beta)=\alpha$.
\end{rem}
Any functor $F\colon \CC \rightarrow \Dc$ between two categories induces a functor 
     $\widehat{F}\colon \F \CC \rightarrow \F \Dc$  between their factorization categories, defined as follows. For every morphism $\lambda$ we have  $\widehat{F}(\lambda)=F(\lambda)$ and for every factorization $(\alpha,\beta)$ of $\mu$ by $\lambda$ we have  $\widehat{F}(\alpha,\beta)=(F(\alpha),F(\beta))$, which is a factorization because $$F(\alpha)\circ F(\lambda) \circ F(\beta)=F(\alpha \circ \lambda\circ \beta)=F(\mu).$$
     It is easy to check that  $\widehat{G \circ F}=\widehat{G} \circ \widehat{F}$ for two composable functors  $F \colon \CC \rightarrow \Dc$ and $G \colon \Dc \rightarrow \E$.
     
     Moreover, this construction is natural in the sense that we have the following commutative diagram $$
\begin{tikzcd}
\F \CC \arrow[d, "\widehat{F}"'] \arrow[r] &  \CC \arrow[d, "F"] \\
\F \Dc \arrow[r]                 & \Dc                     
\end{tikzcd}
    $$
    where the horizontal arrows  are the functors defined in Remark \ref{ARROW}.  
 

\subsection{Natural systems and cohomology}\label{subsec:natural_systems_cohomology}
    Natural systems will be the coefficients of Baues-Wirching cohomology of small categories (see \cite{BAUES0}). A {\em natural system} in the category $\CC$ is a functor $D\colon \F \CC \rightarrow \Ab$, from the category of factorizations of $\CC$ to the category of abelian groups. We will denote $D(\alpha)$ by $D_\alpha$. Moreover, if $\alpha=\id_A$ we will denote its image by $D_A$.

\begin{rem}\label{alpha_y _beta*}
    Every morphism $D(\alpha, \beta)$ can be factorized using Remark \ref{factorization}. Indeed 
    $$D(\alpha, \beta)= D((\id_D, \beta) \circ (\alpha,\id_A) )= D(\id_D, \beta) \circ D(\alpha,\id_A).$$ 
    We will denote by $\alpha_*$ and $\beta^*$ the morphisms $D(\alpha,\id_A)$ and $D(\id_A,\beta)$ respectively. 
\end{rem}

\begin{rem}
For any functor $F\colon \CC \rightarrow \Dc$ and any natural system $D\colon  \F \Dc \rightarrow \Ab$ in $\Dc$ we have a natural system $F^*D$ in $\CC$ given by  $F^*D=D \circ \widehat{F}$ (see \cite{BAUES0}). 
\end{rem}

Let $n\geq 1$ a natural number. The {\em chains of length $n$ or $n$-chains of $\CC$} are the sequences of $n$ composable morphisms $\lambda_1, \dots,\lambda_n$ in $\CC$:
$$
\begin{tikzcd}
A_0 & \cdots \arrow[l, "\lambda_1"'] & A_{n} \arrow[l, "\lambda_{n}"'].
\end{tikzcd}$$
We denote by $N_n(\CC)$ the set of $n$-chains. Moreover $N_0(\CC)$ will be the set of identity morphisms of $\CC$, or equivalently, the objects of $\CC$. 

Let  $F^n(\CC;D)$ (sometimes denoted simply by $F^n$) be the Abelian group of $n$-cochains of $\CC$ with coefficients in $D$, i.e., the Abelian group of all maps $$f\colon N_n(\CC) \rightarrow \bigcup_{\alpha \in \mathrm{Mor}(\CC)} D_\alpha$$ such that for every $n$-chain $(\lambda_1, \dots, \lambda_n)$ we have that $f(\lambda_1, \dots, \lambda_n) \in D(\lambda_1 \circ \cdots \circ \lambda_n)$. For $n=0$,  $F_0(\CC;D)$ is the set of maps  $f\colon \mathrm{Obj} (\CC) \to \bigcup_{A \in \mathrm{Obj}(\CC)} D_A$ such that $f(A) \in D_A$.

Consider the {\em coboundary operator} 
$$\delta^{n-1}: F^{n-1}(\CC;D) \rightarrow F^{n}(\CC;D)$$ 
given by the following formula:
    \begin{align*}
\delta^{n-1} f(\lambda_1,\ldots, \lambda_{n})=&\lambda_{1*}f(\lambda_2,\ldots, \lambda_{n})\\&+\sum_{i=1}^{n-1} (-1)^i f(\lambda_1, \ldots, \lambda_i \circ \lambda_{i+1}, \ldots, \lambda_{n})  \\&+ (-1)^n \lambda_{n}^* f(\lambda_1,\ldots, \lambda_{n-1}),
\end{align*}
    where the morphisms $\lambda_{1*}$ and $\lambda_n^*$ are generated by the following diagrams in the sense of Remark \ref{alpha_y _beta*}: 
$$
\begin{tikzcd}
A_1 \arrow[r, "\lambda_1"]                               & A_0                                                                                 &  & A_0 \arrow[r, "\id_{A_0}"]                               & A_0                                                                                  \\
A_{n} \arrow[u, "\lambda_2 \circ \cdots \circ \lambda_{n}"] & A_{n} \arrow[l, "\id_{A_{n}}"] \arrow[u, "\lambda_1 \circ \cdots \circ \lambda_{n}"'] &\quad  & A_{n-1} \arrow[u, "\lambda_1 \circ \cdots \circ \lambda_{n-1}"] & A_{n} \arrow[u, "\lambda_1 \circ \cdots \circ \lambda_{n}"'] \arrow[l, "\lambda_{n}"]
\end{tikzcd}
$$


Moreover, for $n=0$ we have $$\delta^0(\lambda)=\lambda_{*}(A)-\lambda^*(B),$$ where $A=\mathrm{Dom}(\lambda)$, $B=\mathrm{Codom}(\lambda)$ and $\lambda_*$ and $\lambda^*$ are defined as in Remark \ref{alpha_y _beta*}.

\begin{defi}[\cite{BAUES0,BAUES1}]\label{def:cohomo_Baues}
The {\em cohomology of the} category $\CC$ with coefficients in the natural system $D$, denoted by $\Ho^n(\CC;D)$, is the cohomology of the cochain complex $(F^n(\CC;D),\delta)$, that is,
$$ \Ho^n(\CC;D)= \frac{\ker \delta^{n}}{\Ima \, \delta^{n-1}}.$$
\end{defi}

The cohomology with coefficients in a natural system is functorial as expected. Let $F\colon \CC \rightarrow \Dc$ be a functor between two categories. and $D$ a natural system in $\Dc$. There is a morphism $F^*\colon \Ho^n(\Dc; D) \rightarrow \Ho^n(\CC;F^*D)$ for all integers $n\geq 0$ (see \cite{BAUES0,BAUES1}). Moreover:

\begin{prop}

    Let $F\colon \CC \rightarrow \Dc$ and $G\colon \Dc \rightarrow \E$ be two functors between small categories and let $D$ be a natural system in  $\E$.  For each integer $n\geq 0$ we have three morphisms:
    \begin{enumerate}
        \item $F^*\colon\Ho^n(\Dc; G^*D) \rightarrow \Ho^n(\CC; F^*(G^*D))$.
        \item $G^*\colon\Ho^n(\E; D) \rightarrow \Ho^n(\Dc; G^*D)$.
        \item $(G\circ F)^*\colon\Ho^n(\E;  D) \rightarrow \Ho^n(\CC; (G \circ F)^*D)$.
    \end{enumerate}
    Observe that the natural systems $F^*(G^*D)$, $(F^*\circ G^*)D$ and $(G \circ F)^*D$ are the same as a consequence of functoriality. Then $(G\circ F)^*=F^* \circ G^*$.
\end{prop}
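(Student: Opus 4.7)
My plan is to prove the equality at the cochain level, where it is essentially tautological, and then descend to cohomology.

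First, I would recall (or reconstruct) the definition of the induced map on cochains. Given a functor $F\colon \CC\to\Dc$ and a natural system $D'$ on $\Dc$, the pullback $F^*\colon F^n(\Dc; D') \to F^n(\CC; F^*D')$ is defined by
$$F^*(f)(\lambda_1,\dots,\lambda_n) = f\bigl(F(\lambda_1),\dots,F(\lambda_n)\bigr),$$
which lands in $D'(F(\lambda_1\circ\cdots\circ\lambda_n)) = F^*D'(\lambda_1\circ\cdots\circ\lambda_n)$. This is well-defined as a cochain map, and one checks that it commutes with the coboundary $\delta$ (the only nontrivial point is that $F$ preserves composition, so all three summands in $\delta$ transform consistently, using $\widehat F$ to transport the terms $\lambda_{1*}$ and $\lambda_n^*$). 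Consequently $F^*$ descends to the morphism in cohomology described in the statement.

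Next, I would note that the identification $F^*(G^*D) = (G\circ F)^*D$ is not just an abstract consequence of functoriality but is witnessed concretely by the identity $\widehat{G\circ F} = \widehat G \circ \widehat F$ established earlier in the section. Hence both natural systems agree as functors $\F\CC \to \Ab$, and the target cochain and cohomology groups of the three morphisms under consideration literally coincide (no identification is needed).

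With these preparations, the main claim is an immediate computation at the cochain level. For any $f\in F^n(\E;D)$ and any $n$-chain $(\lambda_1,\dots,\lambda_n)$ in $\CC$,
\begin{align*}
F^*(G^*f)(\lambda_1,\dots,\lambda_n) &= G^*f\bigl(F(\lambda_1),\dots,F(\lambda_n)\bigr) \\
 &= f\bigl(G(F(\lambda_1)),\dots,G(F(\lambda_n))\bigr) \\
 &= f\bigl((G\circ F)(\lambda_1),\dots,(G\circ F)(\lambda_n)\bigr) \\
 &= (G\circ F)^*(f)(\lambda_1,\dots,\lambda_n).
\end{align*}
Since $F^*\circ G^*$ and $(G\circ F)^*$ agree on cochains and both are cochain maps, they induce the same morphism in cohomology, which is exactly the desired equality. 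The main (mild) obstacle is purely bookkeeping: verifying that the target natural systems really coincide on the nose via $\widehat{G\circ F} = \widehat G \circ \widehat F$, so that the composition $F^*\circ G^*$ makes sense without any reindexing.
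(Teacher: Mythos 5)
Your proposal is correct and follows essentially the same route as the paper: establish $(G\circ F)^*D = F^*(G^*D)$ via $\widehat{G\circ F} = \widehat{G}\circ\widehat{F}$, then verify the identity $F^*(G^*f) = (G\circ F)^*f$ directly on cochains and pass to cohomology. Your write-up is a bit more explicit about the codomain bookkeeping, but the substance is identical.
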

\begin{proof}
First, we have 
$$(G \circ F)^*D= D \circ \widehat{(G \circ F)}= D \circ \widehat{G} \circ \widehat{F}=G^*D \circ (\widehat{F})=F^*( G^*(D)).$$ Take any chain $(\lambda_1,\dots,\lambda_n)$ in $N_n(\CC)$ and any cochain $f \in F^n(\E;D)$. Then 
\begin{align*}
    F^*(G^*(f))(\lambda_1,\dots,\lambda_n))&=G^*(f)(F(\lambda_1)\circ \cdots \circ F(\lambda_n))\\&=f((G \circ F)(\lambda_1),\dots,(G \circ F)(\lambda_n)).
\end{align*} So the morphisms in cohomology are also the same.
\end{proof}

\subsection{Relative Cohomology}
Let $\CC$ be a small category, $D$ a natural system on $\CC$ and let $\iota\colon\Uc \hookrightarrow \CC $ be a subcategory of $\CC$. Recall that we have a natural system $\iota^*D$ in $\Uc$ by the composition $D \circ \hat{\iota}$. 
For each integer $n\geq 0$  the inclusion induces a morphism  
$\iota^{n*}\colon F^n(\CC;D) \rightarrow F^n(\Uc,\iota^*D)$.

\begin{defi}
    For each non-negative integer $n\geq 0$ we define the group of cochains of $(\CC;D)$ relative to $\Uc$, denoted by $F^n(\CC, \Uc; D)$, as the kernel of the morphism $\iota^{n*}$. 
\end{defi}

\begin{prop}The groups $F^n(\CC, \Uc; D)$ form a cochain subcomplex of $F^{n}(\CC;D)$. 

\end{prop}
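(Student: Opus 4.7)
The plan is to show that the coboundary operator $\delta^n\colon \F^n(\CC;D) \to \F^{n+1}(\CC;D)$ restricts to a map $\F^n(\CC,\Uc;D) \to \F^{n+1}(\CC,\Uc;D)$. Equivalently, I would prove that $\iota^{(n+1)*} \circ \delta^n = \delta^n \circ \iota^{n*}$ (naturality of the coboundary under the inclusion $\iota\colon \Uc \hookrightarrow \CC$), from which the claim is immediate: if $\iota^{n*}f = 0$ then $\iota^{(n+1)*}(\delta^n f) = \delta^n(\iota^{n*} f) = 0$.

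Concretely, I would take an element $f \in \F^n(\CC;D)$ with $\iota^{n*} f = 0$, pick an arbitrary $(n+1)$-chain $(\lambda_1, \dots, \lambda_{n+1}) \in N_{n+1}(\Uc)$, and evaluate $\delta^n f(\lambda_1, \dots, \lambda_{n+1})$ using the explicit formula recalled in Subsection \ref{subsec:natural_systems_cohomology}. The key observation is that \emph{each} term of this formula evaluates $f$ on an $n$-chain whose entries are all morphisms of $\Uc$: the boundary terms $\lambda_{1*} f(\lambda_2, \dots, \lambda_{n+1})$ and $(-1)^{n+1} \lambda_{n+1}^* f(\lambda_1, \dots, \lambda_n)$ involve sub-chains of the original chain, and each inner term $f(\lambda_1, \dots, \lambda_i \circ \lambda_{i+1}, \dots, \lambda_{n+1})$ involves the composite $\lambda_i \circ \lambda_{i+1}$, which belongs to $\Uc$ because $\Uc$ is closed under composition as a subcategory. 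Since $\iota^{n*} f = 0$, all these evaluations vanish, hence $\delta^n f$ vanishes on $N_{n+1}(\Uc)$, which means $\delta^n f \in \F^{n+1}(\CC,\Uc;D)$.

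For the base case $n=0$, I would separately unwind the formula $\delta^0 f(\lambda) = \lambda_* f(A) - \lambda^* f(B)$ for $\lambda \in \mathrm{Mor}(\Uc)$: since both endpoints $A = \mathrm{Dom}(\lambda)$ and $B = \mathrm{Codom}(\lambda)$ are objects of $\Uc$ (as $\Uc$ is a subcategory), the vanishing of $f$ on $N_0(\Uc) = \mathrm{Obj}(\Uc)$ forces $\delta^0 f(\lambda) = 0$.

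There is no substantial obstacle here; the statement amounts to the functoriality of the Baues-Wirsching cochain complex under subcategory inclusions. The only things to be careful about are that $\Uc$ being a subcategory really does guarantee closure under composition (so intermediate compositions $\lambda_i \circ \lambda_{i+1}$ remain in $\Uc$) and that the operators $\lambda_{1*}$ and $\lambda_{n+1}^*$ appearing in the boundary formula are applied to elements which are already zero, so the result is zero regardless of the morphisms in $D$ they induce.
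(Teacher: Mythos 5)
Your proof is correct and takes essentially the same approach as the paper: evaluate $\delta f$ on an arbitrary chain in $\Uc$, observe that every term in the coboundary formula applies $f$ to a chain that still lies in $\Uc$ (since $\Uc$ is closed under composition), and conclude that all terms vanish. Your separate treatment of the $n=0$ base case is a small extra care that the paper omits but does not change the argument.
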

\begin{proof}

We must only prove  that if $f$ is in $F^{n-1}(\CC, \Uc{;} D)$ then $\delta(f)$ is in $F^{n}(\CC, \Uc{;} D)$. Take any $n$-chain $(\lambda_1,\ldots,\lambda_{n})$ in $\Uc$, then the $n-1$ chains $(\lambda_2,\ldots,\lambda_{n})$, $(\lambda_1,\ldots,\lambda_i \circ \lambda_{i+1},\ldots,\lambda_{n})$ and $(\lambda_1,\ldots,\lambda_{n-1})$ are in $\Uc$ because $\Uc$ is a subcategory, hence, $f$ vanishes in them.
 So 
\begin{align*}
\delta(f)(\lambda_1,\ldots,\lambda_{n})=&
\lambda_{1*}f(\lambda_2,\ldots,\lambda_n)\\&+ \sum_{i=1}^{n-1} (-1)^i f(\lambda_1, \dots, \lambda_i \circ \lambda_{i+1}, \ldots, \lambda_{n})\\&+  (-1)^{n}\lambda_n^* f(\lambda_1,\ldots,\lambda_{n-1})\\=&\lambda_{1*}(0)+(-1)^{n}\lambda_{n}^*(0)=0.\qedhere
\end{align*}
\end{proof}

\begin{defi}\label{Relative_Cohomology}
 We define the {\em cohomology of $\CC$ relative to $\Uc$} with coefficients in $D$, denoted by $\Ho^\bullet(\CC, \Uc; D)$, as the cohomology of the cochain complex $(F^\bullet(\CC, \Uc; D),\delta)$.
\end{defi}

By definition of relative cohomology, there is a long exact sequence
\begin{equation}\label{SHORT}
\cdots \to \Ho^n(\CC,\Uc;D) \xrightarrow{\gamma} \Ho^n(\CC;D) \xrightarrow{i^*} \Ho^n(\Uc;D)\to \cdots 
\end{equation}
where $i^*$ is the morphism in cohomology induced by the inclusion $i\colon \Uc \rightarrow \CC$. 

\subsection{Cup product}
   Let $D, D', D''$ be three natural systems in the category $\CC$. A {\em pairing} $\mu \colon(D;D') \rightarrow D''$ is a natural way to associate  each $2$-chain $(\lambda_1,\lambda_2)$ with a homomorphism $$\mu_{(\lambda_1,\lambda_2)}\colon D'(\lambda_1) \otimes D''(\lambda_2) \rightarrow D(\lambda_1 \circ \lambda_2). $$ We will denote by $x \cdot y$ the element $\mu(x \otimes y)$. Moreover, the association is natural in the sense that it satisfies the following three identities (\cite[Def. 5.19]{BAUES0}):
    \begin{enumerate}
        \item $(g^*x) \cdot y=x \cdot (g_*y)$,
        \item $f_*(x\cdot y)=(f_*x)\cdot y$,
        \item $h^* (x \cdot y)= x \cdot (h^*y)$.
    \end{enumerate}

    \begin{rem}
    If the three natural systems are the same system $D$ the pairing $\mu\colon (D;D) \rightarrow D$ is called an {\em endopairing}.
\end{rem}
     
Consider a pairing $\mu\co (D;D') \rightarrow D''$. The \emph{absolute cup product}
    $$\smile \colon \Ho^n(\CC; D) \otimes \Ho^m(\CC;D') \rightarrow \Ho^{n+m}(\CC;D'')$$
  is defined by first defining the homomorphism on cochains that takes $f \otimes g$ into $f \smile g$  given by
    $$(f \smile g) (\lambda_1,\dots,\lambda_n, \lambda_{n+1},\dots,\lambda_{n+m}) = f(\lambda_1,\dots,\lambda_n) \cdot g (\lambda_{n+1},\dots,\lambda_{n+m})$$ and then defining a morphism in cohomology:
    $\{f \} \smile \{g\}= \{f \smile g\}$ where $\{f\}$ denotes the cohomology class of the cocycle $f$. This is well defined since we have the following lemma.
  \begin{prop}[{\cite[Lemma 5.20]{BAUES0}}]\label{Cup-product}
  The cup product satisfies:
    \begin{equation}\label{eq:cup_product}
        \delta(f \smile g)= (\delta f) \smile g + (-1)^n f \smile (\delta g)
    \end{equation}
    where $f$ is an $n$-cochain.
\end{prop}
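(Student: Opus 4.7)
My plan is to prove the identity pointwise on an arbitrary $(n+m+1)$-chain $(\lambda_1,\dots,\lambda_{n+m+1})$ by expanding both sides using the coboundary formula and the definition of $\smile$, then matching terms with the help of the three naturality identities of the pairing $\mu$. The key observation is that the alternating sum appearing in $\delta(f\smile g)$ naturally decomposes according to whether the index $i$ of the merge $\lambda_i\circ\lambda_{i+1}$ falls strictly inside the $f$-block ($1\le i\le n-1$), inside the $g$-block ($n+1\le i\le n+m$), or exactly at the seam ($i=n$), because the cup product splits $(n+m)$-chains at position~$n$.

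First I would expand $(\delta f)\smile g$ evaluated on $(\lambda_1,\dots,\lambda_{n+m+1})$ as $(\delta f)(\lambda_1,\dots,\lambda_{n+1})\cdot g(\lambda_{n+2},\dots,\lambda_{n+m+1})$, producing three pieces: a left piece $\lambda_{1*}f(\lambda_2,\dots,\lambda_{n+1})\cdot g(\dots)$, the sum $\sum_{i=1}^{n}(-1)^i f(\lambda_1,\dots,\lambda_i\circ\lambda_{i+1},\dots,\lambda_{n+1})\cdot g(\dots)$, and a right piece $(-1)^{n+1}\lambda_{n+1}^*f(\lambda_1,\dots,\lambda_n)\cdot g(\dots)$. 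Analogously I would expand $(-1)^n\bigl(f\smile\delta g\bigr)$ into a left piece $(-1)^n f(\dots)\cdot\lambda_{n+1\,*}g(\lambda_{n+2},\dots,\lambda_{n+m+1})$, the sum $\sum_{j=1}^{m}(-1)^{n+j}f(\dots)\cdot g(\lambda_{n+1},\dots,\lambda_{n+j}\circ\lambda_{n+j+1},\dots,\lambda_{n+m+1})$, and the right piece $(-1)^{n+m+1}f(\dots)\cdot\lambda_{n+m+1}^*g(\lambda_{n+1},\dots,\lambda_{n+m})$.

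Next I would expand $\delta(f\smile g)(\lambda_1,\dots,\lambda_{n+m+1})$ and systematically identify each of its terms. The leading term $\lambda_{1*}(f\smile g)(\lambda_2,\dots,\lambda_{n+m+1})$ equals $\lambda_{1*}f(\lambda_2,\dots,\lambda_{n+1})\cdot g(\dots)$ by naturality property (2), matching the left piece of $(\delta f)\smile g$. The trailing term equals $(-1)^{n+m+1}f(\dots)\cdot\lambda_{n+m+1}^*g(\dots)$ by property (3), matching the right piece of $(-1)^n(f\smile\delta g)$. For the alternating sum, the terms with $1\le i\le n$ (including the seam $i=n$, where the merged arrow $\lambda_n\circ\lambda_{n+1}$ sits at position $n$ in the $n$-chain fed to $f$) reproduce the middle sum of $(\delta f)\smile g$, while the terms with $n+1\le i\le n+m$ reproduce the middle sum of $(-1)^n(f\smile\delta g)$ after the reindexing $j=i-n$ and noting the sign $(-1)^i=(-1)^{n+j}$.

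The only unmatched contributions are the last term of $(\delta f)\smile g$, namely $(-1)^{n+1}\lambda_{n+1}^*f(\lambda_1,\dots,\lambda_n)\cdot g(\lambda_{n+2},\dots,\lambda_{n+m+1})$, and the first term of $(-1)^n(f\smile\delta g)$, namely $(-1)^n f(\lambda_1,\dots,\lambda_n)\cdot\lambda_{n+1\,*}g(\lambda_{n+2},\dots,\lambda_{n+m+1})$; these cancel because property (1) gives $\lambda_{n+1}^*f(\dots)\cdot g(\dots)=f(\dots)\cdot\lambda_{n+1\,*}g(\dots)$ while the prefactors are opposite in sign. The cochain identity then upgrades to cohomology in the usual way, showing $\smile$ is well defined on classes. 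I expect the main nuisance to be the careful sign and index bookkeeping at the seam $i=n,n+1$; this is exactly where properties (1)--(3) must each be invoked in the right place, and a single misplaced sign would break the cancellation.
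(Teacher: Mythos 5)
Your argument is correct. The paper itself does not prove this proposition; it cites \cite[Lemma 5.20]{BAUES0} (Baues–Wirsching), so there is no internal proof to compare against. Your pointwise expansion on an $(n+m+1)$-chain, splitting the alternating sum into the $f$-block ($1\le i\le n$, with the seam $i=n$ correctly assigned to $f$), the $g$-block ($n+1\le i\le n+m$), and matching the two end terms via naturality properties (2) and (3) while cancelling the two leftover seam terms via property (1), is precisely the standard verification that appears in the cited source. The sign and index bookkeeping at the seam is handled correctly: the unmatched term $(-1)^{n+1}\lambda_{n+1}^{*}f\cdot g$ from $(\delta f)\smile g$ and $(-1)^{n}f\cdot\lambda_{n+1*}g$ from $(-1)^{n}(f\smile\delta g)$ are equal up to sign by property (1), so they cancel. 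No gap.
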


By an inductive argument, we can extend Identity (\ref{eq:cup_product}).  
\begin{prop}Let $f_1, \dots , f_n$ be cochains of degrees $m_1, \dots, m_n$ respectively, then:
\begin{equation}\label{Product_Induction}
    \delta(f_1 \smile \dots \smile f_n)= \sum_{i=1}^{n} (-1)^{N_i} f_1 \smile \dots \smile (\delta f_i) \smile \dots \smile f_n
\end{equation}
where $N_1=0$ and $N_{i+1}=\sum_{j=1}^{i}m_j, 1\leq i \leq n$.
\end{prop}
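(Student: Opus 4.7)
The plan is to prove the formula by induction on $n$, using the Leibniz-type identity~(\ref{eq:cup_product}) for a product of two cochains as the base case. For $n=1$ the statement reduces to $\delta f_1 = (-1)^{N_1}\delta f_1$ since $N_1=0$; for $n=2$ it is exactly the content of Proposition~\ref{Cup-product}, noting that $N_1=0$ and $N_2=m_1$.

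For the inductive step, I would fix $n\geq 3$, assume the identity for any product of $n-1$ cochains, and write
\[
f_1 \smile \cdots \smile f_n \;=\; (f_1 \smile \cdots \smile f_{n-1}) \smile f_n,
\]
which is legitimate at the cochain level since the defining formula for the cup product together with the naturality properties of the pairing $\mu$ makes $\smile$ associative. The first factor has degree $m_1+\cdots+m_{n-1} = N_n$. Applying Proposition~\ref{Cup-product} to this product gives
\[
\delta(f_1 \smile \cdots \smile f_n) \;=\; \delta(f_1 \smile \cdots \smile f_{n-1}) \smile f_n \;+\; (-1)^{N_n}\,(f_1 \smile \cdots \smile f_{n-1}) \smile \delta f_n .
\]

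I would then substitute the inductive hypothesis into the first summand and distribute $\smile f_n$ over the sum by bilinearity of the cup product, obtaining
\[
\sum_{i=1}^{n-1} (-1)^{N_i}\, f_1 \smile \cdots \smile \delta f_i \smile \cdots \smile f_{n-1} \smile f_n
\;+\;(-1)^{N_n} f_1\smile\cdots\smile f_{n-1}\smile \delta f_n,
\]
which is exactly the desired expression, since the contribution of $f_n$ to $N_i$ for $i\leq n-1$ is zero and the last summand is the $i=n$ term. The main thing to be careful about is the sign bookkeeping, verifying that the index $N_i=\sum_{j=1}^{i-1}m_j$ carried over from the $(n-1)$-fold product agrees with the one in the statement, and that the boundary term produced by Leibniz supplies precisely the missing $i=n$ contribution with sign $(-1)^{N_n}$. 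Beyond this bookkeeping, the only conceptual point worth stressing is the associativity of $\smile$ at the cochain level, which rests on the naturality identities (1)--(3) of the pairing $\mu$ recalled before Proposition~\ref{Cup-product}; once this is noted, the induction is routine.
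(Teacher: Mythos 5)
Your proof is correct and is exactly the inductive argument the paper merely alludes to: induction on $n$, peeling off the last factor and invoking the two-factor Leibniz rule of Proposition~\ref{Cup-product}, then reindexing. One minor imprecision: associativity of $\smile$ at the cochain level, needed to justify $f_1\smile\cdots\smile f_n=(f_1\smile\cdots\smile f_{n-1})\smile f_n$, follows from associativity of the pairing $\mu$ itself (that is, $(x\cdot y)\cdot z = x\cdot (y\cdot z)$, which the paper tacitly assumes by writing unparenthesized iterated products such as in Proposition~\ref{rel-cup}), rather than from the naturality identities (1)--(3), which only govern compatibility with the maps $f_*$, $g^*$, $h^*$ and say nothing about reassociation.
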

. 
Now we will prove that the cup product behaves well when we have a functor $F \colon \CC \rightarrow \Dc$ between two categories. 

\begin{prop}
    Let $F \colon \CC \rightarrow \Dc$ be a functor between two small categories and let $D_1,D_2,D_3$ be three natural systems in $\Dc$ with a pairing $\mu: (D_1,D_2) \rightarrow D_3$. We have a pairing $F^*\mu: (F^*D_1,F^*D_2) \rightarrow F^*D_3$. Let  
    $$F_i^*\colon  \Ho(\Dc;D_i) \rightarrow \Ho(\CC,F^*D_i), \quad i=1,2,3,$$
    be the morphisms induced by $F$ in cohomology.
    We claim that 
    $$F_3^*(\{f\} \smile \{g\})=F_1^* (\{f\}) \smile F_2^* (\{g\})$$
    for the absolute cup product determined by $F^*\mu$.
\end{prop}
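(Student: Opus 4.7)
The plan is to prove the identity at the level of cochains and then observe that, since $F^{*}$ is a chain map, the identity descends to cohomology. First I would make explicit the induced pairing $F^{*}\mu$. Given a $2$-chain $(\lambda_{1},\lambda_{2})$ in $\CC$, composability means $F(\lambda_{1})\circ F(\lambda_{2})=F(\lambda_{1}\circ\lambda_{2})$, so one has the canonical identifications $F^{*}D_{i}(\lambda_{j})=D_{i}(F(\lambda_{j}))$ and $F^{*}D_{3}(\lambda_{1}\circ\lambda_{2})=D_{3}(F(\lambda_{1})\circ F(\lambda_{2}))$. Under these, define
$$
(F^{*}\mu)_{(\lambda_{1},\lambda_{2})} \;:=\; \mu_{(F(\lambda_{1}),F(\lambda_{2}))}\colon F^{*}D_{1}(\lambda_{1})\otimes F^{*}D_{2}(\lambda_{2})\longrightarrow F^{*}D_{3}(\lambda_{1}\circ\lambda_{2}).
$$
A direct check shows that the three naturality identities $(g^{*}x)\cdot y=x\cdot(g_{*}y)$, $f_{*}(x\cdot y)=(f_{*}x)\cdot y$ and $h^{*}(x\cdot y)=x\cdot(h^{*}y)$ carry over from $\mu$ to $F^{*}\mu$, because $F^{*}D_{i}$ is defined by precomposition with $\widehat{F}$ and functoriality of $\widehat{F}$ sends the relevant factorizations in $\F\CC$ to the analogous ones in $\F\Dc$.

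Next I would compute both sides of the claimed identity on an arbitrary $(n+m)$-chain $(\lambda_{1},\dots,\lambda_{n+m})$ in $\CC$. By the definition of $F^{*}$ on cochains recalled after Definition~\ref{def:cohomo_Baues}, and by the definition of the absolute cup product,
$$
F_{3}^{*}(f\smile g)(\lambda_{1},\dots,\lambda_{n+m})\;=\;(f\smile g)\bigl(F(\lambda_{1}),\dots,F(\lambda_{n+m})\bigr)
$$
$$
=\;f\bigl(F(\lambda_{1}),\dots,F(\lambda_{n})\bigr)\cdot_{\mu} g\bigl(F(\lambda_{n+1}),\dots,F(\lambda_{n+m})\bigr),
$$
while, by the definition of $F^{*}\mu$ above,
$$
\bigl(F_{1}^{*}f\smile F_{2}^{*}g\bigr)(\lambda_{1},\dots,\lambda_{n+m})\;=\;F_{1}^{*}f(\lambda_{1},\dots,\lambda_{n})\cdot_{F^{*}\mu} F_{2}^{*}g(\lambda_{n+1},\dots,\lambda_{n+m})
$$
$$
=\;f\bigl(F(\lambda_{1}),\dots,F(\lambda_{n})\bigr)\cdot_{\mu} g\bigl(F(\lambda_{n+1}),\dots,F(\lambda_{n+m})\bigr),
$$
so the cochain-level identity $F_{3}^{*}(f\smile g)=F_{1}^{*}f\smile F_{2}^{*}g$ holds verbatim.

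Finally, since $F^{*}$ is a cochain map (it takes cocycles to cocycles and coboundaries to coboundaries, a fact used implicitly already to make sense of $F_{i}^{*}$ on cohomology classes), the cochain identity passes to cohomology, yielding
$$
F_{3}^{*}\bigl(\{f\}\smile\{g\}\bigr)\;=\;\{F_{3}^{*}(f\smile g)\}\;=\;\{F_{1}^{*}f\smile F_{2}^{*}g\}\;=\;F_{1}^{*}(\{f\})\smile F_{2}^{*}(\{g\}),
$$
as required. The most delicate step is not the computation itself but the bookkeeping in step one: one must be careful that the identifications $F^{*}D_{i}(\lambda)=D_{i}(F(\lambda))$ are coherent with the factorization $(\alpha,\beta)\mapsto(\widehat{F}(\alpha),\widehat{F}(\beta))$, so that the induced pairing $F^{*}\mu$ is actually natural in the sense of Baues and Wirsching. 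Once this is established, the remainder is a one-line calculation.
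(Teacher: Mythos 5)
Your proposal is correct and takes essentially the same route as the paper: the same definition of the induced pairing $F^{*}\mu$ via $\mu_{(F(\lambda_1),F(\lambda_2))}$, and the same direct evaluation of both sides on an arbitrary $(n+m)$-chain, with the descent to cohomology following from $F^{*}$ being a cochain map. Your additional remark about checking the three naturality identities for $F^{*}\mu$ is a reasonable point of care that the paper leaves implicit.
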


\begin{proof}
    The existence of the pairing $F^*\mu$ follows from the fact that if   $(\lambda_1,\lambda_2) \in N_2(\CC)$ is a $2$-chain then we have a morphism $$F^*\mu_{(\lambda_1,\lambda_2)}: F^*D_1(\lambda_1) \otimes F^*D_2(\lambda_2) \rightarrow F^*D_3(\lambda_1\circ \lambda_2) $$
    given by $$F^*\mu_{(\lambda_1,\lambda_2)}=\mu_{(F(\lambda_1),F(\lambda_2))}.$$
    
    Take any elements $f \in F^n(\Dc,D_1)$ and $g \in F^m(\Dc,D_2)$, we have:
    \begin{align*}
    &F_3^*(f \smile g)(\lambda_1,\ldots,\lambda_{n+m})\\&=(f \smile g)(F(\lambda_1),\ldots,F(\lambda_{n+m})) \\& = f(F(\lambda_1),\ldots,F(\lambda_{n}))\cdot g(F(\lambda_{n+1}),\ldots,F(\lambda_{n+m}))\\&= F_1^*f(\lambda_1,\ldots,\lambda_{n}) \cdot F^*_2g(\lambda_{n+1},\ldots,\lambda_{n+m}).\qedhere
    \end{align*}
\end{proof}

\subsection{Relative cup product and cup-length}

A family of subcategories $\{\Uc_0, \ldots, \Uc_n \}$ of the category $\CC$ is a {\em geometric cover}  if for every chain in $\CC$  there is some $i\in \{0,\ldots,n\}$ such that the chain belongs to $\Uc_i$.

Recall the definition of the union of two subcategories. For $\Uc$ and $\Vc$ two subcategories of a category $\CC$ we define the subcategory $\Uc \cup \Vc$ as the smallest subcategory of $\CC$ that contains all the objects and morphisms of both $\Uc$ and $\Vc$, as well as all their compositions.

\begin{defi}\label{defi:cup_product_relative}
    Let $\CC$ be a category, $\Uc$ be a subcategory and $\{\Uc_0, \dots, \Uc_n\}$ a geometric cover of $\Uc=\Uc_0 \cup \ldots \cup \Uc_n$. Let $D$ be a natural system in $\CC$ with an endopairing $\mu \colon (D,D) \rightarrow D$. We define \begin{equation}\label{eq:relative_cup_product_chain_complexes}
    \smile\co F^{p_0}(\CC,\Uc_0;D) \otimes \cdots \otimes F^{p_n}(\CC,\Uc_n;D) \rightarrow F^{p}(\CC,\Uc;D)
    \end{equation}
    (where $p=p_0+\dots+p_n$) by $$(f_0 \smile \dots \smile f_n)(\lambda_1,\dots,\lambda_{p})=f_0(\lambda_1,\dots,\lambda_{p_{0}}) \cdots f_n(\lambda_{P_{n}},\dots,\lambda_{p})$$
    where $P_{i}$ is the sum $p_0+\cdots + p_{i-1}$.
    This induces a \emph{relative cup product}
\begin{equation}\label{eq:relative_cup_product_cohomology}
        \smile\co \Ho^{p_0}(\CC,\Uc_0;D) \otimes \cdots \otimes \Ho^{p_n}(\CC,\Uc_n;D) \rightarrow \Ho^{p}(\CC,\Uc;D).
    \end{equation}  
\end{defi}

\begin{prop}\label{rel-cup}\label{VARIOS}
    The products defined in Equations (\ref{eq:relative_cup_product_chain_complexes}) and (\ref{eq:relative_cup_product_cohomology}) in Definition \ref{defi:cup_product_relative} are well-defined. 
\end{prop}

\begin{proof}
    The morphism (\ref{eq:relative_cup_product_chain_complexes})
    is well defined since we have a geometric cover. Indeed, the cochain $f_0 \smile \dots \smile f_n$ vanishes in every chain in $\Uc$. By definition, for any $p$-chain $(\lambda_1,\ldots,\lambda_{p})$ we have that
    $$(f_0 \smile \dots \smile f_n)(\lambda_1,\dots,\lambda_{p})=f_0(\lambda_1,\dots,\lambda_{p_0})  \cdots f_n(\lambda_{P_n+1},\dots,\lambda_{p}).$$ But since the chain $(\lambda_1,\ldots,\lambda_{p})$ is in $\Uc$ and we have a geometric cover, the chain belongs to some $\Uc_i$. Hence the subchain $(\lambda_{P_i+1},\dots,\lambda_{P_{i+1}})$ is in $\Uc_i$, so $f_i(\lambda_{P_i+1},\dots,\lambda_{P_{i+1}})=0$ and the cup-product also vanishes.
    
    Now we will see that morphism \eqref{eq:relative_cup_product_cohomology} is  well-defined.  Formula \eqref{Product_Induction} ensures after a tedious computation that if we have two sequences $f_0, \dots, f_n$ and $f'_0, \dots, f'_n$ such that $\{f_i\}=\{f'_i\}$ then $$\{f_0 \smile \dots \smile f_n\}=\{f'_0 \smile \dots \smile f'_n\}.$$ 
    In the case of two elements the proof goes as follows. 
    
    If we have four cochains $f, f' \in F^n(\CC,\Uc_0;D)$ and $g, g' \in F^n(\CC,\Uc_1;D)$ such that $\{f\}=\{f'\}$ and $\{g\}=\{g'\}$, we will prove that $\{f \smile g\}=\{f'\smile g'\}$. 
    
    As we know $f=f'+\delta h_1$ and $g=g'+\delta h_2$ with $h_1 \in F^{n-1}(\CC,\Uc_0;D)$ and $h_2 \in F^{m-1}(\CC,\Uc_1;D)$, by using the fact that $\delta$ is a derivation (Formula \eqref{eq:cup_product}) we have 
    \begin{align*}
         &(f \smile g)(\lambda_1,\ldots,\lambda_n, \lambda_{n+1}, \ldots \lambda_{n+m})\\
         &=f(\lambda_1,\ldots,\lambda_n) \cdot g(\lambda_{n+1},\ldots,\lambda_{n+m}) \\
         &=(f'+\delta h_1)(\lambda_1,\ldots,\lambda_n) \cdot (g'+\delta h_2)(\lambda_{n+1},\ldots,\lambda_{n+m}) \\
         &= f'(\lambda_1,\ldots,\lambda_n)\cdot g' (\lambda_{n+1},\ldots,\lambda_{n+m})\\ &\quad + f'(\lambda_1,\ldots,\lambda_n) \cdot \delta h_2 (\lambda_{n+1},\ldots,\lambda_{n+m}) \\ &\quad + \delta h_1(\lambda_1,\ldots,\lambda_n) \cdot g' (\lambda_{n+1},\ldots,\lambda_{n+m})\\
        & \quad+ \delta h_1 (\lambda_1,\ldots,\lambda_n) \cdot \delta h_2(\lambda_{n+1},\ldots,\lambda_{n+m}) \\
          &= (f' \smile g' + f' \smile \delta h_2 + \delta h_1 \smile g' + \delta h_1 \smile h_2) (\lambda_1,\ldots\lambda_{n+m}).
    \end{align*}
    But we also have that 
    \begin{enumerate}
        \item $\delta(f \smile h_2)=\delta f \smile h_2+(-1)^{n}f \smile \delta h_2=(-1)^{n}f \smile \delta h_2$, since $\delta f=0$;
        \item  $\delta(h_1 \smile g)=\delta h_1 \smile g+(-1)^{n-1} h_1 \smile \delta g= \delta h_1 \smile g$, since $\delta g=0$;
        \item $\delta(h_1 \smile \delta h_2)= \delta h_1 \smile \delta h_2 +(-1)^{n-1} h_1 \smile \delta  \delta h_2 =\delta h_1 \smile \delta h_2$, since $\delta \delta =0$.
    \end{enumerate}
    Therefore $$f \smile g= f'\smile g' +\delta((-1)^{n}f \smile h_2 + h_1 \smile g + h_1 \smile \delta h_2).\qedhere$$
\end{proof}

Proposition \ref{VARIOS} will be needed in the main result of this paper (Theorem \ref{thm:main_thm}) where geometric coverings with $n+1$ subcategories are considered. A two-factor cup product would need  $\{U_0,\dots, U_i\}$ to be a geometric covering of $U_0\cup \cdots\cup U_i$ for each $i\leq n$.

\begin{lemma}\label{Natural}
    Suppose that we have a geometric cover $\{\Uc_0, \ldots, \Uc_n\}$ of $\Uc=\Uc_0 \cup \ldots \cup \Uc_n$. 
    Let us consider the morphisms 
    $$\gamma_i \colon\Ho^n(\CC,\Uc_i;D) \rightarrow \Ho^n(\CC;D), \quad i=0,\dots,n,$$
   and
    $$\gamma \colon \Ho^n(\CC,\Uc;D) \rightarrow \Ho^n(\CC;D),$$
    as in the exact sequence \eqref{SHORT}. We claim that
    $$\gamma(\{f_0 \smile \dots \smile f_n\})=\gamma_0(\{f_0\}) \smile \dots  \smile \gamma_n (\{f_n\}).$$
\end{lemma}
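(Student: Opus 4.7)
The plan is to verify the identity at the level of cochains, where it becomes essentially tautological, and then descend to cohomology.

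First, I would spell out what $\gamma$ and each $\gamma_i$ are at the cochain level. By the very construction of relative cohomology (Definition \ref{Relative_Cohomology}), $\F^\bullet(\CC,\Uc;D)$ sits inside $\F^\bullet(\CC;D)$ as the subcomplex of those cochains that vanish on chains contained in $\Uc$, and the morphism $\gamma$ in the long exact sequence \eqref{SHORT} is the one induced by this inclusion of cochain complexes; the analogous statement holds for each $\Uc_i$ and $\gamma_i$. Thus, on representatives, $\gamma$ and $\gamma_i$ are literally forgetting the relative-vanishing condition.

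Second, I would observe that the relative cup product constructed in Proposition \ref{VARIOS} is given by exactly the same pointwise formula as the absolute cup product; the content of that proposition is precisely that when one has a geometric cover the formula still produces a cochain vanishing on $\Uc$. Consequently, if $f_i \in \F^{p_i}(\CC,\Uc_i;D)$ for $i=0,\dots,n$, then both of the cochains
\[
\gamma(f_0 \smile \cdots \smile f_n) \quad \text{and} \quad \gamma_0(f_0) \smile \cdots \smile \gamma_n(f_n)
\]
in $\F^p(\CC;D)$ are defined by the same formula
\[
(\lambda_1, \dots, \lambda_p) \,\mapsto\, f_0(\lambda_1, \dots, \lambda_{p_0}) \cdot f_1(\lambda_{P_1+1},\dots,\lambda_{P_2}) \cdots f_n(\lambda_{P_n+1}, \dots, \lambda_p),
\]
and hence coincide as elements of $\F^p(\CC;D)$.

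Third, since all the maps in sight ($\gamma$, the $\gamma_i$, and the cup products) are cochain maps, i.e.\ commute with $\delta$, this cochain-level equality descends directly to an equality of cohomology classes, which is the claim. I do not anticipate any serious obstacle here: the statement is a naturality check for the relative cup product with respect to the forgetful inclusions into absolute cochains, and no subtle diagram chasing or spectral sequence argument is needed. The only mildly delicate point is to keep clear the different domains of the cup products (relative versus absolute, and the various $\Uc_i$), which is why I would begin by explicitly identifying $\gamma$ and $\gamma_i$ with the inclusions of subcomplexes before comparing formulas.
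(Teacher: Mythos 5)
Your proposal is correct and follows essentially the same route as the paper: both arguments reduce to the observation that $\gamma$ and the $\gamma_i$ are induced by inclusions of cochain subcomplexes (so they act as the identity on representatives), and that the relative cup product is given by the same pointwise formula as the absolute one, after which the identity holds already at the cochain level and descends to cohomology.
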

\begin{proof}
 
We know by the definition of $\gamma$ that $\gamma(\{f_0 \smile \dots \smile f_n\})$ is the cohomology class of the cochain  $f_0 \smile \dots \smile f_n$,  taken in $\Ho^{\bullet}(\CC;D)$. From the definition of the relative cup product and the fact that it is well defined in cohomology, we have that $$\{f_0 \smile \dots \smile f_n\}=\{f_0\}\smile \dots \smile \{f_n\}.$$ Finally, for every $i\in\{0,\ldots, n\}$ we have that $\gamma_i\{f_i\}=\{f_i\}$ where the right part of the equality is the class defined in the total cohomology. Then the proof is complete because
\begin{align*}
        \gamma(\{f_0 \smile \dots \smile f_n\})&=\{f_0 \smile \dots \smile f_n\}=\{f_0\} \smile \dots \smile \{f_n\}\\&=\gamma_0(\{f_0\})\smile \dots \smile \gamma_n(\{f_n\}). \qedhere
    \end{align*}
\end{proof}

\begin{defi}
    Let $\CC$ be a category with a natural system $D$ and an endopairing $\mu \colon (D;D) \rightarrow D$. We define the {\em cup-length} of the cohomology $\Ho(\CC;D)$ as the largest integer $n\geq 1$ such that there are $n$ elements $\xi_1,\ldots,\xi_n$ in $\Ho(\CC;D)$ of degree greater than or equal to $1$ such that $\xi_1 \smile \cdots \smile \xi_n \neq 0$. 
    
    In a similar way, we can define the cup-length of any subset of $\Ho(\CC;D)$.
\end{defi}

\subsection{A computational improvement}

We develop a computationally cheaper way to compute the Baues-Wirsching cohomology of a small category (Definition \ref{def:cohomo_Baues}). More precisely, we  show how to obtain the cohomology from two different projective resolutions of the constant functor $\mathbb{Z}\colon \F \CC \rightarrow \Ab$ (see Definition \ref{Projective_Resolution1} and Corollary \ref{Projective_Resolution2}). On the one hand, by applying a derived functor to the first resolution, we recover the cochain complex $(F^*(\CC;D),\delta)$ defined by Baues and Wirsching. On the other hand, by applying the derived functor to the second resolution, we obtain a \textit{reduced} cochain complex $\tilde{F}^*(\CC;D),\tilde{\delta})$ satisfying $\Ho^*(\tilde{F}^*(\CC;D),\tilde{\delta})\cong \Ho^*(F^*(\CC;D),\delta)$.  The reduced cochain complex is the subcomplex of the cochain complex $F^*(\CC;D)$ given by the cochains $f$ such that $f(\lambda_1,\dots,\lambda_n)=0$ if $\lambda_i=\id$ ($\lambda_i$ is an identity) for any $i\in \{1,\ldots,n\}$. An element $f\in F^*(\CC;D)$ will be referred to as a \textit{non-degenerate} cochain. 

Recall that for  a small category $\CC$, the category of functors $[\F \CC,\Ab]$ is an abelian category with enough projectives and injectives \cite[Section 1.1]{Cegarra}. It is known that for every natural system $D\colon \F \CC \rightarrow \Ab$ we have that $\varprojlim D \simeq \mathrm{Nat}(\mathbb{Z},D)\simeq \Ho^0(\CC,D)$ \cite[Section 1.2]{Cegarra}. Hence, we can define the right derived functor with an injective resolution: $$0 \rightarrow D \rightarrow I_0 \rightarrow \dots \rightarrow I_n \rightarrow \dots $$
and compute the cohomology of the chain complex obtained by applying $\mathrm{Nat(\mathbb{Z},-)}.$ 

Dually, we can begin with a projective resolution of $\mathbb{Z}$: 
$$0 \leftarrow \mathbb{Z} \leftarrow P_0 \leftarrow \dots \leftarrow P_n \leftarrow \dots,$$
then we can apply $\mathrm{Nat}(-,D)$ to obtain a cochain complex, and finally, we can compute its cohomology.

\begin{defi}\label{Projective_Resolution1}
    For each non-negative integer $n$, we define $P_n\colon \F\CC \to \Ab$ as:
    $$P_n=\bigoplus_{\lambda=\lambda_1\circ \dots \circ \lambda_n,(\lambda_1, \dots, \lambda_n) \in\N_n(\CC) } \mathbb{Z}  \mathrm{Hom}(\lambda,-),$$
    where we denote by $ \mathbb{Z} S$ the free abelian group generated by a set $S$. 
\end{defi}

\begin{rem}
    For each $\mu\in \F \CC$, $\mathbb{Z} \mathrm{Hom}(\lambda,\mu)$ is the free abelian group generated by all factorizations $(\alpha,\beta)$ of $\mu$ by $\lambda$, i.e., all $\alpha$ and $\beta$ such that $\mu=\alpha \circ \lambda \circ \beta$.
\end{rem}
\begin{rem}
    For each morphism $\mu$ there is an isomorphism between $P_n(\mu)$ and $\mathbb{Z} N_{n+2}(\CC)(\mu)$ where $$N_{n}(\CC)(\mu)=\{(\lambda_1,\dots,\lambda_n) \in \N_n(\CC)| \mu=\lambda_1 \circ \dots \circ \lambda_n)\}.$$ 
    Indeed for each element $\mu$ we have that 
    
    \begin{align*}
        P_n(\mu)&=\bigoplus_{\lambda=\lambda_1\circ \dots \circ \lambda_n,\lambda_1, \dots, \lambda_n \in\N_n(\CC) }\mathbb{Z} \mathrm{Hom}(\lambda,\mu)\\&=\bigoplus_{\mu=\alpha \circ \lambda_1\circ \dots \circ \lambda_n \circ \beta,(\alpha,\lambda_1, \dots, \lambda_n,\beta) \in\N_{n+2}(\CC) }\mathbb{Z}\\&\simeq \mathbb{Z} \N_{n+2}(\CC)(\mu)
    \end{align*}
\end{rem}

\begin{defi}\label{Natural_Transformation}
    We define a natural transformation $\delta_n \colon P_n\rightarrow P_{n-1}$ term by term. For each morphism $\mu$ we define $\delta_n^\mu \colon P_n(\mu)\rightarrow P_{n-1}(\mu)$ given by:
    $$\delta^\mu_n(\lambda_0,\dots,\lambda_{n+1})=\sum_{i=0}^{n} (-1)^i (\lambda_0,\dots,\lambda_i \circ \lambda_{i+1}, \dots, \lambda_{n+1}).
    $$
\end{defi}

\begin{prop} It holds that
$$0 \leftarrow \mathbb{Z} \leftarrow P_0 \leftarrow \dots \leftarrow P_n \leftarrow \dots$$ as previously defined (Definitions \ref{Projective_Resolution1} and \ref{Natural_Transformation}) is a projective resolution of the constant functor $\mathbb{Z}$.
\end{prop}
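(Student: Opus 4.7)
The plan is to establish the two properties required of a projective resolution: pointwise projectivity of each $P_n$, and exactness of the augmented complex, where the augmentation $\epsilon\colon P_0 \to \mathbb{Z}$ sends every generator (viewed as a factorization $(\alpha,\beta)$ of $\mu$ through some identity $\id_A$) to $1$.

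\emph{Projectivity.} I would first invoke the enriched Yoneda lemma: for any natural system $F\colon \F\CC\to\Ab$ and any object $\lambda\in\F\CC$, there is a natural isomorphism
$$\mathrm{Nat}(\mathbb{Z}\,\mathrm{Hom}(\lambda,-),\, F)\;\cong\; F(\lambda).$$
Since $F\mapsto F(\lambda)$ is exact in $[\F\CC,\Ab]$, the functor $\mathbb{Z}\,\mathrm{Hom}(\lambda,-)$ is projective. As each $P_n$ is a direct sum of such representables indexed by $N_n(\CC)$, and arbitrary direct sums of projectives in a functor category to $\Ab$ are projective, every $P_n$ is projective.

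\emph{Exactness.} The second Remark above identifies $P_n(\mu)$ with the free abelian group on $N_{n+2}(\CC)(\mu)$, so for each fixed $\mu\in\mathrm{Mor}(\CC)$ I reduce the statement to exactness of the augmented complex of abelian groups
$$0 \leftarrow \mathbb{Z} \xleftarrow{\,\epsilon_\mu\,} \mathbb{Z} N_2(\CC)(\mu) \xleftarrow{\,\delta_0^\mu\,} \mathbb{Z} N_3(\CC)(\mu) \xleftarrow{\,\delta_1^\mu\,} \cdots$$
Exactness at this pointwise level implies exactness in $[\F\CC,\Ab]$, since limits and colimits (hence kernels and images) in this functor category are computed objectwise. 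One also needs to check $\delta_{n-1}\circ \delta_n=0$, which is the standard simplicial identity obtained by the usual cancellation of signs when collapsing adjacent pairs $\lambda_i\circ\lambda_{i+1}$ and $\lambda_{i+1}\circ\lambda_{i+2}$ in two different orders.

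\emph{Contracting homotopy.} The core of the argument is to exhibit a contracting homotopy $h_n\colon P_n(\mu)\to P_{n+1}(\mu)$. Writing an element of $P_n(\mu)$ as a tuple $(\lambda_0,\ldots,\lambda_{n+1})$ with $\mu=\lambda_0\circ\cdots\circ\lambda_{n+1}$, I would set
$$h_n(\lambda_0,\ldots,\lambda_{n+1})=(\id_B,\lambda_0,\lambda_1,\ldots,\lambda_{n+1}),$$
where $B$ is the codomain of $\lambda_0$. Applying $\delta_{n+1}^\mu$ to $h_n(\lambda_0,\ldots,\lambda_{n+1})$, the $i=0$ term collapses $\id_B\circ\lambda_0$ to $\lambda_0$ and returns the original tuple, while the remaining terms for $i\geq 1$ precisely reproduce $-h_{n-1}\delta_n^\mu(\lambda_0,\ldots,\lambda_{n+1})$ after reindexing $j=i-1$. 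Hence
$$\delta_{n+1}^\mu\circ h_n + h_{n-1}\circ \delta_n^\mu = \id,$$
and a similar verification handles the augmentation $\epsilon_\mu\circ h_{-1} + \delta_0^\mu\circ h_0$ by taking $h_{-1}(1)=(\id_B)$ for each object $B$ occurring as the codomain of $\mu$. This contracting homotopy forces the pointwise complex to be exact.

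\emph{Main obstacle.} The routine parts (projectivity via Yoneda, checking $\delta\circ\delta=0$) are essentially formal. The only delicate step is the bookkeeping of indices and signs in verifying $\delta h + h\delta = \id$ and checking that $h$ is well defined, i.e.\ that $(\id_B,\lambda_0,\ldots,\lambda_{n+1})$ indeed lies in $N_{n+3}(\CC)(\mu)$ and that the identity chosen (rather than being natural in $\mu$) still yields a well-defined set-theoretic splitting at each $\mu$; note that $h$ need not be natural in $\mu$, which is fine since exactness in $[\F\CC,\Ab]$ is a pointwise property.
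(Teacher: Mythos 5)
Your proof takes the same approach as the paper's: the paper also exhibits the contracting chain homotopy $s^\mu_n(\lambda_0,\dots,\lambda_{n+1})=(\id,\lambda_0,\dots,\lambda_{n+1})$, citing Baues--Wirsching, and leaves the projectivity, $\delta^2=0$, and sign bookkeeping implicit. Your proposal is a correct, fully spelled-out version of that same argument (with the harmless slip that $h_{-1}(1)$ should be the pair $(\id_{\mathrm{Codom}(\mu)},\mu)$ rather than $(\id_B)$ alone).
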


 \begin{proof}
     There is a contracting chain homotopy $s^\mu_n\colon P_n(\mu) \rightarrow P_{n+1}(\mu)$ given in each $\mu$ as follows:
     $s^\mu_n(\lambda_0,\dots,\lambda_{n+1})=(\id,\lambda_0,\dots,\lambda_{n+1})$
     \cite{BAUES1}.
 \end{proof}
 
Now we will see how to recover the original definition of Baues-Wirsching cohomology from this construction.
\begin{prop}
Let $D\colon \F \CC \rightarrow \Ab$ be a natural system. For each $\lambda\colon c \rightarrow d$ there is natural isomorphism $\mathrm{Nat}( \mathbb{Z} \mathrm{Hom}(\lambda,-),D)\simeq D(\lambda).$
\end{prop}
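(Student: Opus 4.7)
The plan is to prove this as an instance of the (abelian-group enriched) Yoneda lemma, using the fact that $\mathbb{Z}\mathrm{Hom}(\lambda,-)$ is the free $\Ab$-valued functor on the representable $\mathrm{Hom}(\lambda,-)\colon \F\CC\to \mathbf{Sets}$. The identity factorization of $\lambda$ by itself is $(\mathrm{id}_d,\mathrm{id}_c)\in \mathrm{Hom}(\lambda,\lambda)$, and the candidate isomorphism will be evaluation at this element.

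First I would define the two maps. In one direction, set
\[
\Phi\colon \mathrm{Nat}(\mathbb{Z}\mathrm{Hom}(\lambda,-),D)\longrightarrow D(\lambda), \qquad \Phi(\eta)=\eta_{\lambda}\bigl((\mathrm{id}_d,\mathrm{id}_c)\bigr).
\]
In the other direction, given $x\in D(\lambda)$ define $\Psi(x)\colon \mathbb{Z}\mathrm{Hom}(\lambda,-)\to D$ as follows. For each $\mu\in \F\CC$ and each factorization $(\alpha,\beta)\colon \lambda\to \mu$ (so $\mu=\alpha\circ\lambda\circ\beta$), put
\[
\Psi(x)_\mu(\alpha,\beta)=D(\alpha,\beta)(x)=\alpha_*\beta^*(x),
\]
and extend by $\mathbb{Z}$-linearity to the free abelian group on the factorizations of $\mu$ by $\lambda$.

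Next I would verify that $\Psi(x)$ is genuinely a natural transformation. For any morphism $(\gamma,\delta)\colon \mu\to \mu'$ in $\F\CC$, the action of $\mathbb{Z}\mathrm{Hom}(\lambda,-)$ on $(\gamma,\delta)$ sends a factorization $(\alpha,\beta)$ to $(\gamma\circ\alpha,\beta\circ\delta)$. Applying $D(\gamma,\delta)$ to $\Psi(x)_\mu(\alpha,\beta)=D(\alpha,\beta)(x)$ and using functoriality of $D$ shows it equals $D(\gamma\circ\alpha,\beta\circ\delta)(x)=\Psi(x)_{\mu'}(\gamma\circ\alpha,\beta\circ\delta)$, which gives naturality.

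Finally I would check that $\Phi$ and $\Psi$ are mutually inverse. The composition $\Phi\circ\Psi$ evaluates to $D(\mathrm{id}_d,\mathrm{id}_c)(x)=x$, since $(\mathrm{id}_d,\mathrm{id}_c)$ is the identity of $\lambda$ in $\F\CC$ and $D$ is functorial. For $\Psi\circ\Phi$, given $\eta$ and a factorization $(\alpha,\beta)\colon \lambda\to \mu$, naturality of $\eta$ applied to $(\alpha,\beta)$ forces
\[
\eta_\mu(\alpha,\beta)=\eta_\mu\bigl((\alpha,\beta)_{*}(\mathrm{id}_d,\mathrm{id}_c)\bigr)=D(\alpha,\beta)\bigl(\eta_\lambda(\mathrm{id}_d,\mathrm{id}_c)\bigr)=\Psi(\Phi(\eta))_\mu(\alpha,\beta),
\]
and the two transformations agree on the generators, hence on all of $\mathbb{Z}\mathrm{Hom}(\lambda,\mu)$. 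Naturality of the resulting bijection in $\lambda$ (viewed as an object of $\F\CC$) and in $D$ is immediate from the construction. I do not expect a genuine obstacle here; the only subtlety is keeping straight the conventions for how a morphism $(\alpha,\beta)$ in $\F\CC$ acts (it produces $\alpha_*\beta^*$ as recorded in Remark \ref{alpha_y _beta*}) so that the Yoneda computation actually lands in the correct group $D(\mu)$.
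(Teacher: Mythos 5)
Your proof is correct and follows essentially the same approach as the paper: evaluation at the identity factorization of $\lambda$ in one direction, and extension by functoriality of $D$ in the other, i.e.\ the (free-abelian-group enriched) Yoneda lemma. You spell out the naturality verification and the two inverse compositions a bit more carefully than the paper does, and you also use the correct ordering $(\id_d,\id_c)$ for the identity of $\lambda\colon c\to d$ in $\F\CC$, where the paper writes $(\id_c,\id_d)$, which appears to be a transposition slip.
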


\begin{proof}
        Let $\alpha \colon \mathbb{Z} \mathrm{Hom}(\lambda,-) \implies D$ be a natural transformation. We can define an element in $D(\lambda)$ by the image of the neutral element of $(\id_c,\id_d) \in \mathbb{Z} \mathrm{Hom}(\lambda,\lambda)$, by $\alpha$, i.e., $\alpha_{\lambda}(\id_d,\id_c) $. 
        
        Reciprocally, suppose that we take an element $v \in D(\lambda)$. We can define a natural transformation $\alpha$ given by $\alpha_\lambda(\id_d,\id_c)=v$. Indeed, we can naturally extend this to a natural transformation. For each $\mu$ there is a morphism $\alpha_\mu \colon  \mathbb{Z} \mathrm{Hom}(\lambda,\mu) \rightarrow D(\mu)$ given by taking any $(\gamma,\beta) \colon \lambda \rightarrow \mu$ to the element $\alpha_\mu(\gamma,\beta)=D(\gamma,\beta)(v).$ This is obviously a natural transfomation by the functoriality of $D$.
\end{proof}    

\begin{prop}\label{Natural_Transformations_Cochains}
    Let $D\colon \F \CC \rightarrow \Ab$ be a natural system. We have that: $\mathrm{Nat}(P_n,D)\simeq F^n(\CC;D).$
\end{prop}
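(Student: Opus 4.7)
The plan is to reduce the statement to the previous proposition (the Yoneda-type identification $\mathrm{Nat}(\mathbb{Z}\mathrm{Hom}(\lambda,-),D)\simeq D(\lambda)$) by exploiting that $P_n$ is a direct sum of representable functors indexed by $n$-chains.

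First I would use the universal property of the direct sum in the functor category $[\F\CC,\Ab]$: since $P_n=\bigoplus_{(\lambda_1,\ldots,\lambda_n)\in N_n(\CC)}\mathbb{Z}\mathrm{Hom}(\lambda_1\circ\cdots\circ\lambda_n,-),$ a natural transformation $P_n\Rightarrow D$ is the same datum as a family of natural transformations, one for each $n$-chain. Concretely, this gives the identification
\begin{equation*}
\mathrm{Nat}(P_n,D)\;\cong\;\prod_{(\lambda_1,\ldots,\lambda_n)\in N_n(\CC)}\mathrm{Nat}\bigl(\mathbb{Z}\mathrm{Hom}(\lambda_1\circ\cdots\circ\lambda_n,-),D\bigr).
\end{equation*}

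Next I would apply the previous proposition to each factor: for the composite $\lambda=\lambda_1\circ\cdots\circ\lambda_n$, we have $\mathrm{Nat}(\mathbb{Z}\mathrm{Hom}(\lambda,-),D)\cong D(\lambda)=D_{\lambda_1\circ\cdots\circ\lambda_n}.$ Plugging this in, the right-hand side becomes $\prod_{(\lambda_1,\ldots,\lambda_n)\in N_n(\CC)} D_{\lambda_1\circ\cdots\circ\lambda_n},$ whose elements are exactly the functions $f\colon N_n(\CC)\to\bigcup_{\alpha}D_\alpha$ with $f(\lambda_1,\ldots,\lambda_n)\in D(\lambda_1\circ\cdots\circ\lambda_n)$, i.e.\ $F^n(\CC;D)$ by definition.

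Explicitly, the isomorphism sends a natural transformation $\alpha\colon P_n\Rightarrow D$ to the cochain $f_\alpha$ defined on the chain $(\lambda_1,\ldots,\lambda_n)$ by $f_\alpha(\lambda_1,\ldots,\lambda_n)=\alpha_{\lambda_1\circ\cdots\circ\lambda_n}\bigl(\id,\id\bigr)$, where $(\id,\id)$ denotes the distinguished generator in $\mathbb{Z}\mathrm{Hom}(\lambda_1\circ\cdots\circ\lambda_n,\lambda_1\circ\cdots\circ\lambda_n)$ belonging to the summand indexed by $(\lambda_1,\ldots,\lambda_n)$. Conversely, given $f\in F^n(\CC;D)$ one reconstructs $\alpha$ by the formula $\alpha_\mu(\gamma,\beta)=D(\gamma,\beta)\bigl(f(\lambda_1,\ldots,\lambda_n)\bigr)$ on the summand indexed by $(\lambda_1,\ldots,\lambda_n)$, and naturality follows from functoriality of $D$, exactly as in the previous proposition. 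Since both maps are additive and mutually inverse, we obtain the claimed isomorphism of abelian groups.

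The proof is essentially bookkeeping; the only subtle point, and the one I would state carefully, is keeping the indexing correct: the same underlying morphism $\lambda$ may arise as the composite of many different $n$-chains, and each such chain contributes a separate summand in $P_n$ and a separate factor in $F^n(\CC;D)$, so the bijection is over chains rather than over morphisms.
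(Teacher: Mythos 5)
Your proposal is correct and follows the same chain of equivalences as the paper's proof: decompose $\mathrm{Nat}(P_n,D)$ as a product over $n$-chains via the universal property of the direct sum, apply the Yoneda-type identification termwise, and reassemble into $F^n(\CC;D)$. The explicit description of the inverse maps and the remark about indexing over chains rather than morphisms are helpful elaborations, but they do not constitute a different route.
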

\begin{proof}This follows from a chain of equivalences:
\begin{align*}
    \mathrm{Nat}(P_n,D)&\simeq \mathrm{Nat}(\bigoplus \mathbb{Z} \mathrm{Hom}(\lambda,-),D)\\& \simeq \prod \mathrm{Nat}( \mathbb{Z} \mathrm{Hom}(\lambda,-),D) \simeq \prod D(\lambda) \simeq F^n(\CC;D)
\end{align*}
     where we always assume that the coproduct and product is indexed by all $n$ chains $(\lambda_1, \dots , \lambda_n)$ and $\lambda=\lambda_1\circ \dots \circ \lambda_n$.
\end{proof}

Moreover we can obtain a reduced cochain complex. 
\begin{defi}
    Let $n$ be a strictly positive integer. We define the natural system $P_n'$ as: $$P_n'=\bigoplus_{\lambda=\lambda_1\circ \dots \circ \lambda_n,\lambda_1, \dots, \lambda_n \in\N'_n(\CC) } \mathbb{Z} \mathrm{Hom}(\lambda,-),$$ where $\N'_n(\CC)$ is the \textit{degenerate nerve}, i.e., $\N'_n(\CC)$ is the set of chains $(\lambda_1,\dots,\lambda_n)$ such that there is some $i\in\{1,\dots,n\}$ with $\lambda_i=\id$, i.e., being an identity. We also define $P'_0$ as the constant functor $0$.
\end{defi}

\begin{rem}
    There is also a nice expression of the previous chain complex if we evaluate in a morphism $\mu$. \begin{align*}
    P'_n(\mu)=\bigoplus_{\lambda=\lambda_1\circ \dots \circ \lambda_n,(\lambda_1, \dots, \lambda_n) \in\N'_n(\CC) } \mathbb{Z} \mathrm{Hom}(\lambda,\mu)=\\ \bigoplus_{\mu=\alpha \circ \lambda_1\circ \dots \circ \lambda_n\circ\beta,(\lambda_1 \dots, \lambda_n) \in\N'_n(\CC) } \mathbb{Z}.\end{align*} Hence, $P'_n(\mu)$ is the free abelian group of all $n+2$-chain $(\lambda_0,\dots,\lambda_{n+1})$ such that there is at least one interior arrow in the chain such that is an identity, i.e., $\lambda_i=\id$ for $0 < i <n+1$.
\end{rem}
    
\begin{prop}
    Restricting the natural transformations $\delta_n$ from Definition \ref{Natural_Transformation} we have a projective resolution of the constant functor $0.$
\end{prop}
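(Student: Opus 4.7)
The claim has two parts: first that $\delta_n$ really restricts to a map $P'_n\to P'_{n-1}$, and then that the resulting subcomplex of projectives is exact and augments to the constant functor $0$. My plan is to dispatch the restriction by a sign-cancellation argument, recall projectivity of $P'_n$ from its definition as a direct sum of representables, and deduce acyclicity from the contracting homotopy already used for $P_\bullet$.

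For the restriction, fix a generator $(\lambda_0,\dots,\lambda_{n+1})$ of $P'_n(\mu)$ with $\lambda_k=\id$ for some interior index $k\in\{1,\dots,n\}$, and examine the $n+1$ summands of $\delta_n(\lambda_0,\dots,\lambda_{n+1})$ indexed by $i\in\{0,\dots,n\}$. For $i\notin\{k-1,k\}$ the identity $\lambda_k$ survives as an entry of the new $(n+1)$-chain; a direct look at its new position shows it remains among the interior indices of a chain of length $n+1$, so the summand lies in $P'_{n-1}(\mu)$. For $i=k-1$ and $i=k$ the compositions $\lambda_{k-1}\circ\id$ and $\id\circ\lambda_{k+1}$ produce the same reduced chain $(\lambda_0,\dots,\lambda_{k-1},\lambda_{k+1},\dots,\lambda_{n+1})$, but with opposite signs $(-1)^{k-1}$ and $(-1)^k$, so they cancel. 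Thus $\delta_n(P'_n)\subseteq P'_{n-1}$, and projectivity of each $P'_n$ is immediate: a direct sum of representable functors $\mathbb{Z}\mathrm{Hom}(\lambda,-)$ is projective in $[\F\CC,\Ab]$ by the enriched Yoneda lemma, while $P'_0=0$ is trivially projective.

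For acyclicity I would reuse the contracting homotopy $s_n^\mu(\lambda_0,\dots,\lambda_{n+1})=(\id,\lambda_0,\dots,\lambda_{n+1})$ from the proof of the previous proposition. Inserting $\id$ at position $0$ of a chain in $P'_n$ simply shifts any interior identity one slot to the right in a new chain of length $n+3$, where that slot is still interior; hence $s_n$ restricts to a map $P'_n\to P'_{n+1}$. The identity $\delta_{n+1}s_n+s_{n-1}\delta_n=\id_{P_n}$, valid on the whole of $P_\bullet$ for $n\geq 1$, therefore restricts to $P'_\bullet$ and gives exactness at every $P'_n$ with $n\geq 2$. At $P'_1$, one checks by direct inspection that $\delta_1$ vanishes on the basis element $(\alpha,\id_A,\beta)$ since its two summands $(\alpha,\beta)$ and $-(\alpha,\beta)$ cancel; combined with $P'_0=0$ and $s_0|_{P'_0}=0$, the identity $\delta_2 s_1=\id_{P'_1}$ forces $\delta_2:P'_2\to P'_1$ to be surjective, yielding exactness at $P'_1$ as well. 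The augmentation to the zero functor is automatic.

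The main obstacle is the careful sign bookkeeping in the cancellation step for the restriction of $\delta_n$, together with the boundary-case analysis when $k=1$ or $k=n$ (verifying that the two absorbing terms still cancel and that the non-absorbing terms still keep an interior identity). Once these are checked, projectivity is immediate from Yoneda and acyclicity is a formal consequence of the contracting homotopy, which restricts to the degenerate subcomplex by the same inspection.
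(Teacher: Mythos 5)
Your proof is correct and follows essentially the same route as the paper: the same sign-cancellation at indices $i=k-1,k$, the observation that other boundary terms keep an interior identity (hence stay degenerate), and the reuse of the contracting homotopy $s_n$ after checking that it preserves interior identities. The paper is terser but makes the same points, so your proposal is a more detailed version of the same argument.
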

\begin{proof}
    First we prove that we can restrict the natural transformations. Indeed, take a morphism $\mu$ and chain $(\lambda_0,\dots,\lambda_{n+1})$ such that $\mu=\lambda_0\circ \dots \lambda_{n+1}$ and there is $0 < i <n+1$ such that $\lambda_i=\id$. We have that $\delta^\mu_n(\lambda_0,\dots,\lambda_{n+1})$ is the alternating sum of compositions. If we have that $k\neq i-1,i$ then the chain $(\lambda_0,\dots,\lambda_k \circ \lambda_{k+1},\dots,\lambda_n)$ is also degenerate since it contains the morphism $\lambda_i$. If $k=i-1$ or $i$ we have that $(\lambda_0,\dots, \lambda_{i-1} \circ \id, \lambda_i,\dots, \lambda_{n+1})=(\lambda_0,\dots, \lambda_{i-1},\lambda_i,\dots,\lambda_n)=(\lambda_0,\dots,\lambda_{i-1},\id \circ \lambda_{i},\dots,\lambda_{n+1})$ and the alternate sum of them is $0$.

    Now we can see that is also a projective resolution since the homotopy defined by $s^\mu_n(\lambda_0,\dots,\lambda_{n+1})=(\id,\lambda_0,\dots,\lambda_{n+1})$ takes a degenerate chain to a degenerate chain and we also have the desired result. Furthermore, since $P_0'=0$ we have a resolution of $0$. 
\end{proof}

\begin{defi}
    We define the functor $\tilde{P}_n$ as the quotient $\frac{P_n}{P'_n}$. More precisely, for each $\lambda$ we have $\tilde{P}_n(\lambda)=\frac{P_n(\lambda)}{P'_n(\lambda)}$.
\end{defi}

\begin{rem}
     $\tilde{P}_n(\lambda)$ is the abelian group generated by all chains  $(\lambda_0,\dots,\lambda_{n+1})$  of length $n+2$ such that $\lambda=\lambda_0 \circ \dots \circ \lambda_{n+1}$ and $\lambda_i\neq\id$ for any $0 < i <n+1$
\end{rem}
Using the previous Proposition we obtain that this defines a new projective resolution of $\mathbb{Z}$.
\begin{corollary}
    There is a projective resolution of $\mathbb{Z}$ given by $\tilde{P}_n$.
\end{corollary}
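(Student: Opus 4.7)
The plan is to deduce this directly from the preceding two propositions by comparing the three complexes $P_\bullet$, $P'_\bullet$, and $\tilde P_\bullet$ via the short exact sequence
$$0 \to P'_\bullet \to P_\bullet \to \tilde P_\bullet \to 0.$$
First I would verify that each $\tilde P_n$ is projective. Since $N'_n(\CC) \subseteq N_n(\CC)$, the summand $P'_n$ is a direct summand of $P_n$ (both are direct sums of representables indexed over chains), so the quotient
$$\tilde P_n \;\cong\; \bigoplus_{\lambda = \lambda_1\circ\cdots\circ\lambda_n,\ (\lambda_1,\dots,\lambda_n)\in N_n(\CC)\setminus N'_n(\CC)} \mathbb{Z}\,\mathrm{Hom}(\lambda,-)$$
is itself a direct sum of representable functors. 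Representables of the form $\mathbb Z\,\mathrm{Hom}(\lambda,-)$ are projective in $[\F\CC,\Ab]$ by the Yoneda--type adjunction already used in Proposition \ref{Natural_Transformations_Cochains}, so $\tilde P_n$ is projective. This also shows that the differentials and augmentation descend to $\tilde P_\bullet$: $\delta_n$ restricts to $P'_\bullet$ by the previous proposition, and hence passes to the quotient, while $P'_0 = 0$ forces $\tilde P_0 = P_0$, so the augmentation $P_0\to\mathbb Z$ survives intact.

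Second, I would invoke the long exact sequence in homology associated to the short exact sequence of complexes above:
$$\cdots \to H_n(P'_\bullet) \to H_n(P_\bullet) \to H_n(\tilde P_\bullet) \to H_{n-1}(P'_\bullet) \to \cdots.$$
By the previous proposition, $P'_\bullet$ is a resolution of the zero functor, i.e. $H_n(P'_\bullet) = 0$ for all $n$. Consequently $H_n(\tilde P_\bullet) \cong H_n(P_\bullet)$, and the earlier proposition gives $H_0(P_\bullet) = \mathbb Z$ with $H_n(P_\bullet) = 0$ for $n \geq 1$. Hence $\tilde P_\bullet$ is acyclic with $H_0 = \mathbb Z$, and together with projectivity from the first step this is exactly the statement that $\tilde P_\bullet$ is a projective resolution of the constant functor $\mathbb Z$.

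I do not expect any real obstacle: the nontrivial content (that $\delta$ preserves the degenerate part, and that the insertion homotopy $s_n^\mu$ sends degenerate chains to degenerate chains) has already been done in the proof that $P'_\bullet$ resolves $0$. The only thing to watch is the low-degree bookkeeping, in particular that $P'_0 = 0$ so the quotient does not disturb the augmentation; once that is noted, the long exact sequence argument is immediate and no further computation is required.
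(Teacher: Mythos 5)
Your argument is correct and fills in exactly what the paper leaves implicit (the paper states this as a corollary without proof). Both ingredients you need are established in the text: $P_\bullet$ resolves $\mathbb{Z}$, $P'_\bullet$ resolves $0$, and the differentials preserve $P'_\bullet$, giving the short exact sequence of complexes $0\to P'_\bullet\to P_\bullet\to\tilde P_\bullet\to 0$, from which the long exact sequence yields $H_n(\tilde P_\bullet)\cong H_n(P_\bullet)$. Your projectivity argument is also right: $P'_n$ is a direct summand of $P_n$ because both are direct sums of representables over nested index sets, so the quotient is itself a direct sum of representables, hence projective, and $P'_0=0$ leaves the augmentation untouched. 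One small simplification worth noting: the contracting homotopy $s^\mu_n$ sends $P'_n(\mu)$ into $P'_{n+1}(\mu)$ (as shown in the proof that $P'_\bullet$ resolves $0$), so it descends to a contracting homotopy on the quotient $\tilde P_\bullet$ directly. That gives exactness of $\tilde P_\bullet$ without invoking the long exact sequence at all, which is arguably closer in spirit to how the preceding propositions were proved; but your LES route is equally valid and requires no new computation.
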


With this we can define the wanted reduced cochain complex.
    
\begin{corollary}\label{Projective_Resolution2}
    We can define the cohomology of a small category with a natural system $D \colon \F \CC \rightarrow \Ab $ with the previous projective resolution. This means that $\Ho^*(\CC,D)$ is the cohomology of the cochain complex given by $\mathrm{Nat}(\tilde{P}_n,D)=\tilde{F}^*(\CC;D)$ where $\tilde{F}^*(\CC;D)$ is the abelian group of non-degenerate cochains, i.e., it only takes nonzero values in the non-degenerate cochains.
\end{corollary}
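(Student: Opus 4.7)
The plan is to combine standard homological algebra with a direct identification of the cochain complex. By the preceding corollary, $0\leftarrow \mathbb{Z} \leftarrow \tilde P_0 \leftarrow \tilde P_1 \leftarrow \cdots$ is a projective resolution of the constant functor $\mathbb{Z}$ in $[\F\CC,\Ab]$. The fundamental theorem of homological algebra then guarantees that the cohomology of the cochain complex $\mathrm{Nat}(\tilde P_\bullet,D)$ is canonically isomorphic to the cohomology of $\mathrm{Nat}(P_\bullet,D)$; both compute $\mathrm{Ext}^*(\mathbb{Z},D)$. Since by Proposition~\ref{Natural_Transformations_Cochains} and the unravelling of $\delta_n$ sketched just before that proposition the latter complex coincides with $(F^\bullet(\CC;D),\delta)$ of Definition~\ref{def:cohomo_Baues}, we automatically obtain $\Ho^*(\mathrm{Nat}(\tilde P_\bullet,D))\cong \Ho^*(\CC;D)$.

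It remains to identify the reduced complex $\mathrm{Nat}(\tilde P_\bullet,D)$ with the group of non-degenerate cochains $\tilde F^\bullet(\CC;D)$ equipped with the usual differential. To this end, I would apply the left-exact contravariant functor $\mathrm{Nat}(-,D)$ to the defining short exact sequence $0\to P'_n \to P_n \to \tilde P_n \to 0$, obtaining
\begin{equation*}
0\to \mathrm{Nat}(\tilde P_n,D)\to \mathrm{Nat}(P_n,D)\to \mathrm{Nat}(P'_n,D).
\end{equation*}
This exhibits $\mathrm{Nat}(\tilde P_n,D)$ as the kernel of the restriction map, i.e.\ as the natural transformations $P_n\to D$ that vanish on the subfunctor $P'_n$ generated by degenerate chains. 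Under the identification of Proposition~\ref{Natural_Transformations_Cochains}, this kernel corresponds precisely to the subgroup of cochains $f\in F^n(\CC;D)$ with $f(\lambda_1,\ldots,\lambda_n)=0$ whenever some $\lambda_i$ is an identity, i.e.\ $\tilde F^n(\CC;D)$.

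For the differential, the boundary $\tilde\delta_n\colon \tilde P_n\to\tilde P_{n-1}$ is by construction the map induced on quotients by $\delta_n$. Hence its dual under $\mathrm{Nat}(-,D)$ coincides with the restriction of the dual of $\delta_n$ to the subgroup $\mathrm{Nat}(\tilde P_n,D)\hookrightarrow \mathrm{Nat}(P_n,D)$, which under the preceding identification is exactly the Baues-Wirsching coboundary $\delta$ restricted to non-degenerate cochains. The main point to keep an eye on is that this restriction is internally consistent --- namely, that $\delta$ sends non-degenerate cochains to non-degenerate cochains --- but this is not a separate obstacle to check by hand: it is forced by the existence of $\tilde\delta_n$ on the resolution side, which was already established in the previous corollary via the alternating-sign cancellation arising when an interior arrow is an identity. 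The rest is formal bookkeeping.
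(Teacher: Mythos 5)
Your proof is correct and takes essentially the same route as the paper: both arguments rest on the fact that $P_\bullet$ and $\tilde P_\bullet$ are projective resolutions of $\mathbb{Z}$, hence both compute $\mathrm{Ext}^*(\mathbb{Z},D)$, together with an identification of $\mathrm{Nat}(\tilde P_\bullet,D)$ with the non-degenerate cochain complex. The only difference is cosmetic: the paper identifies $\mathrm{Nat}(\tilde P_n,D)\cong\tilde{\F}^n(\CC;D)$ by writing $\tilde P_n$ directly as a direct sum of representables indexed by non-degenerate chains and re-running the Yoneda-type argument of Proposition~\ref{Natural_Transformations_Cochains}, whereas you obtain the same identification by applying the left-exact contravariant functor $\mathrm{Nat}(-,D)$ to the short exact sequence $0\to P'_n\to P_n\to\tilde P_n\to 0$ and reading off the kernel of the restriction map.
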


\begin{proof}
    It follows from the fact that $\tilde{P}_n$ can be expressed as $$\bigoplus_{\lambda=\lambda_1\circ \dots \circ \lambda_n,(\lambda_1, \dots, \lambda_n) \in\tilde{\N}_n(\CC) } \mathbb{Z} \mathrm{Hom}(\lambda,\mu)$$ where $\tilde{\N}(\CC)$ denotes the nontrivial chains and the group of natural transformations between $\tilde{P}_n$ and $D$ is precisely $\tilde{F}^*(\CC;D)$ using a similar argument as the one given in the proof of Proposition \ref{Natural_Transformations_Cochains}.
\end{proof}

Furthermore, it can be shown how the relative cohomology constructions and the cup product also work for this definition of cohomology.

The abelian group morphism $\iota^{n*} \colon F^n(\CC;D) \rightarrow F^n(\Uc;D)$ can be restrited to a morphism $\tilde\iota^{n*} \colon \tilde F^n(\CC;D) \rightarrow \tilde F^n(\Uc;D)$. We will also have a cochain subcomplex $\tilde F^*(\CC,\Uc;D)$ given by the kernel of the previous maps. Moreover the cohomology groups of this cochain complex will be the same as the one defined in Definition \ref{Relative_Cohomology}.

\begin{prop}
    The cohomology groups of cochain complexes $F^*(\CC,\Uc;D)$ and $\tilde F^*(\CC,\Uc;D)$ are the same.
\end{prop}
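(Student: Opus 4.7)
The plan is to compare the long exact sequences of the pair $(\CC, \Uc)$ arising in the two cochain complexes and apply the Five Lemma. First, I would check that the restriction morphisms $\iota^{n*}\colon \F^n(\CC;D) \to \F^n(\Uc;D)$ and $\tilde\iota^{n*}\colon \tilde\F^n(\CC;D) \to \tilde\F^n(\Uc;D)$ are surjective. This is straightforward: any (non-degenerate) cochain on $\Uc$ extends to a (non-degenerate) cochain on $\CC$ by assigning the value $0$ to every chain of $\CC$ that is not entirely contained in $\Uc$. Hence one has two short exact sequences of cochain complexes
\begin{equation*}
0 \to \F^*(\CC,\Uc;D) \to \F^*(\CC;D) \xrightarrow{\iota^*} \F^*(\Uc;D) \to 0,
\end{equation*}
\begin{equation*}
0 \to \tilde\F^*(\CC,\Uc;D) \to \tilde\F^*(\CC;D) \xrightarrow{\tilde\iota^*} \tilde\F^*(\Uc;D) \to 0,
\end{equation*}
which fit into a commutative diagram via the natural inclusions $\tilde\F^* \hookrightarrow \F^*$.

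Next, I would argue that the vertical inclusions $\tilde\F^*(\CC;D) \hookrightarrow \F^*(\CC;D)$ and $\tilde\F^*(\Uc;D) \hookrightarrow \F^*(\Uc;D)$ are genuine quasi-isomorphisms, not merely that both complexes happen to compute the same cohomology. To that end, note that these inclusions are obtained by applying the contravariant functor $\mathrm{Nat}(-,D)$ to the canonical surjections $P_\bullet \twoheadrightarrow \tilde P_\bullet$ between two projective resolutions of the constant functor $\mathbb{Z}$ (Definition \ref{Projective_Resolution1} and Corollary \ref{Projective_Resolution2}). Since any morphism between two projective resolutions of the same object is a chain homotopy equivalence, applying $\mathrm{Nat}(-,D)$ preserves this property, yielding the required quasi-isomorphisms.

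Finally, I would pass to the induced long exact sequences in cohomology from both short exact sequences and compare them in the standard ladder. Four out of every five consecutive vertical arrows are isomorphisms at each position (namely the ones coming from the absolute cohomologies of $\CC$ and of $\Uc$), so the Five Lemma forces the comparison map $\Ho^n(\tilde\F^*(\CC,\Uc;D)) \to \Ho^n(\F^*(\CC,\Uc;D))$ to be an isomorphism for every $n \geq 0$. The main obstacle is the second step: one must be careful to identify the inclusion $\tilde\F^* \hookrightarrow \F^*$ with the map induced by a genuine morphism of projective resolutions, rather than settle for an abstract coincidence of cohomology groups. Once this identification is in place, the rest of the argument is a routine diagram chase.
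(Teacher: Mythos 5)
Your proposal follows essentially the same route as the paper: form the two short exact sequences of cochain complexes, observe that the inclusions $\tilde\F^*(\CC;D)\hookrightarrow\F^*(\CC;D)$ and $\tilde\F^*(\Uc;D)\hookrightarrow\F^*(\Uc;D)$ are quasi-isomorphisms, and apply the Five Lemma to the resulting ladder of long exact cohomology sequences. The only difference is that you explicitly verify the surjectivity of the restriction maps (needed for right-exactness) and justify the quasi-isomorphism claim via the chain homotopy equivalence between the projective resolutions $P_\bullet$ and $\tilde P_\bullet$, both of which the paper leaves implicit.
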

\begin{proof}
    We have a commutative diagram of cochain complex
\[\begin{tikzcd}
	{F^*(\CC,\Uc;D)} & {} & {F^*(\CC;D)} && {F^*(\Uc;D)} \\
	{\tilde F^*(\CC,\Uc;D)} && {\tilde F^*(\CC;D)} && {\tilde F^*(\Uc;D)}
	\arrow["\gamma", from=1-1, to=1-3]
	\arrow["{\iota^*}", from=1-3, to=1-5]
	\arrow[from=2-1, to=1-1]
	\arrow["{\tilde \gamma}"', from=2-1, to=2-3]
	\arrow[from=2-3, to=1-3]
	\arrow["{\tilde \iota}"', from=2-3, to=2-5]
	\arrow[from=2-5, to=1-5]
\end{tikzcd}\]

where the horizontal sequences are short exact sequences, the at rightmost
vertical arrows are the inclusions that take a non-degenerate cochain to a cochain that vanishes in degenerate chains and the left vertical arrows is induce by the universal property of the kernel. Since the two leftmost vertical inclusions induce cohomology isomorphisms, the last one also induces a cohomology isomorphism by Five lemma in the long exact sequence in cohomology given by the two short exact sequences.
\end{proof}

In turn, the cup product is also well-defined since if we take two non-degenerate cochains $f\in \tilde F^n(\CC;D)$ and $g \in \tilde F^m(\CC;D)$ and degenerate chain $(\lambda_1,\dots,\lambda_{n+m})$ where $\lambda_i=\id$ for some $i$ then
$$f\smile g (\lambda_1,\dots,\lambda_{n+m})=f(\lambda_1,\dots,\lambda_{n}) \cdot g(\lambda_{n+1},\dots,\lambda_{n+m})=0$$
since $f(\lambda_1,\dots,\lambda_{n})=0$ if $i\leq n$ and $g(\lambda_{n+1},\dots,\lambda_{n+m})=0$ if $i\geq n+1$.

\subsection{An example}\label{subsec:example}
We illustrate how to compute Baues-Wirsching cohomology and the cup-length for a specific category and natural system. 
    Let $\Si$ be the small category depicted in  the following diagram (identities are omitted):
    $$\begin{tikzcd}[column sep=12pt, row sep=12pt]    
C \arrow[r, "\alpha", bend left] \arrow[r, "\beta"', bend right] & D
\end{tikzcd}
$$
    Its factorization category $\mathcal{FS}$ is generated by the following diagram:
    $$
\begin{tikzcd}[column sep=12pt, row sep=12pt] 
 & \alpha &\\
\id_C \arrow[rd] \arrow[ru] &        & \id_D \arrow[lu] \arrow[ld] \\
& \beta  &                            
\end{tikzcd}$$

An example of a natural system $\mathcal{D}$ in the category $\Si$ is:
    $$
\begin{tikzcd}
                                              & \mathbb{Z}   &                                               \\
\mathbb{Z} \arrow[rd, "p"'] \arrow[ru, "\id"] &              & \mathbb{Z} \arrow[lu, "\id"'] \arrow[ld, "p"] \\
                                              & \mathbb{Z}_2 &                                              
\end{tikzcd}
$$
where $p\colon \mathbb{Z} \rightarrow \mathbb{Z}_2$ is the projection.

We will show that the cohomology groups are 
$$\Ho^0(\Si;D)=\mathbb{Z}, \quad \Ho^1(\Si;D)=\mathbb{Z}_2 \text{\ and\ } \Ho^n(\Si;D)=0 \text{\ for\ } n\geq 2.$$ 

We will use the reduced cochain complex. We will denote a $0$-cochain $f \in \tilde{F}^0(\Si,D)$ by $(c,d)$ where $f(\id_C)=c$ and $f(\id_D)=d$. In a similar way we will denote a $1$-cochain $g \in \tilde{F}^1(\Si,D)$ by $(a,b)$ if $g(\alpha)=a$ and $g(\beta)=b$. Since all chains of length $n\geq 2$ are degenerate we have that $\tilde{F}^n(\mathcal{S};D)=0$. Using these notations, the coboundary map is given by
    $$\delta^0(c,d)=(c-d,[c+d])$$
    and
    $$\delta^n=0 \, (n\geq 1).$$
    Hence we have  $$\Ho^0(\Si;D)=\ker \delta^0=\{(c,d) \in \mathbb{Z} \times \mathbb{Z} \colon c=d, [c]=[d]\}=\{(c,c)\in \mathbb{Z}  \times \mathbb{Z}\}\cong \mathbb{Z}.$$
    Also,
    $$\Ima \delta^0=\{(c-d,[c+d]) \colon (c,d) \in \mathbb{Z} \times \mathbb{Z}\} $$ and
    $$\ker\delta^1=\tilde{F}^1(\CC;D),$$
    so, $$\Ho^1(\Si;D)=\frac{\ker \delta^1}{\Ima \delta^0} \cong \mathbb{Z}_2.$$ 
Moreover, we know that $\Ho^n(\Si;D)=0$ for every integer $n\geq 2$.
Therefore, the cup-length of $\Ho(\Si;D)$ is at most $1$ for every endopairing since $\Ho^2(\Si;D)=0$. In particular we know that the cup length of $\Ho(\Si;D)$ is $1$ for the endopairing in which we define the product as the multiplication by an integer with the structure of a bimodule.

Now consider the subcategory generated by the object $C$, i.e., the subcategory with only one object and the identity morphism. We can compute the relative cohomology by using the following facts:
\begin{align*}
\iota^{0*}(c,d)&=(c),\\
 \iota^{1*}(a,b)&=(0),\\
\end{align*}
So we conclude that:
\begin{align*}
\tilde{F}^0(\Si, C ; D)&=\{(0,d)\},\\
\tilde{F}^1(\Si, C ; D)&=\{(a,b)\},\\
\end{align*}
The coboundary maps are given by
\begin{align*}
    \delta^0(0,d)&=(-d,[d]),\\
\end{align*}
Hence, it can be verified that $\Ho^0(\Si,C;D)=0$ and $\Ho^1(\Si,C;D)\cong \mathbb{Z}_2$. 
\section{Bifibrations, \v{S}varc genus and sectional number} \label{sec:bifibrations_Svarc}

In this section, we recall the notion of a bifibration, state and prove some of its properties, and provide some relevant examples. After that, we introduce the notions of \v{S}varc genus and sectional number of a functor, and we prove  some of their properties and the main result of the paper.

\subsection{Homotopy in small categories} We begin by recalling the notion of homotopy in small categories as introduced by Lee (\cite{LEE,LEE2}) and by establishing some notation. 

The {\em interval category $\mathbb{I}_m$} of length $m\geq 0$ consists of $m+1$ objects with zigzag arrows,
	 $$0 \longrightarrow 1 \longleftarrow 2 \longrightarrow \cdots \longrightarrow (\longleftarrow) m.$$

Given two small categories $\CC$ and $\Dc$ we denote their product by $\CC \times \D$. 
	Let $F,G\colon \CC\to \Dc$ be two functors between small categories. We say that $F$  and $G$ are {\em homotopic}, denoted by $F \simeq G$,  if, for some $m\geq 0$, there exists a functor $H\colon \CC \times \mathbb{I}_m \rightarrow \D$, called a homotopy (of length $m$), such that $H_0=F$ and $H_m=G$.

\subsection{The notion of a bifibration}
In this subsection we follow the references  \cite{DAZIN}, \cite[Appendix A]{KOEN}, \cite{STREITCHER}, \cite{VISTOLI} and \cite[Chapter 12]{PENNER}, as well as \cite[Section 4]{TANAKA}. 

Let $P\colon \E \to \Ba$ be a functor.
A morphism $\phi\in \E(e_1,e_2)$ is {\em Cartesian} (with respect to $P$) if, for every arrow $\beta \in\E(e,e_2)$ and  every arrow
 $\bbar\alpha\in\Ba(Pe,Pe_1)$  such that $P(\phi)\circ \bbar\alpha=P\beta$,
there exists a unique arrow $\alpha\in\E(e,e_1)$ such that 
$\phi\circ\alpha=\beta$ and 
$P(\alpha)=\bbar\alpha$ (see Diagram (\ref{diag:def_cartesian})).
\begin{equation}\label{diag:def_cartesian}
\begin{tikzcd}
& e_1 \ar{d}{\phi} \\
 e \arrow[ur,dashrightarrow,"\alpha"]\arrow[r,"\beta"'] & e_2 
\end{tikzcd}
\quad \quad
 \begin{tikzcd}
& P(e_1) \arrow[d,"P\phi"] \\
P(e)\arrow[ur,"{\bbar\alpha}"]\arrow[r,"P\beta"'] & P(e_2)
\end{tikzcd}
\end{equation}

\begin{defi}A functor $P\colon \E \to \Ba$ is a {\em fibration} if for any arrow $\bbar\phi\in \Ba(b_1,b_2)$ and any object $e_2\in\mathrm{Obj}(\E)$ such that $P(e_2)=b_2$, there exists a Cartesian arrow $\phi\in \E(e_1,e_2)$ such that $P(\phi)=\bbar\phi$. The arrow $\phi$ is called a {\em Cartesian lift} of $\bbar\phi$ with codomain $e_2$.
\end{defi}

\begin{defi}
Let $P\colon \E \to \Ba$ be a fibration. We define $\E_b$ as the {\em fiber} of $P$ over $b\in\mathrm{Obj}(\Ba)$, i.e., as the subcategory of $\E$ with objects $e\in \mathrm{Obj}(\E)$  such that $Pe=b$, and with arrows 
$\nu\in \E(e_1,e_2)$ such that $P\nu=\id_b$. These arrows are called {\em vertical} arrows.
\end{defi}

The Cartesian lift of a given $\bbar\phi\colon b_1 \to b_2$ with a given codomain $e_2$ is unique, up to a unique vertical arrow. This follows from the unicity in the definition of Cartesian arrow.

By using the axiom of choice, we can take a particular lift, which will be denoted by
$$\Cart(\bbar\phi, e_2)\colon \bar\phi^*e_2 \to e_2.$$
This particular choice defines a functor ${\bar\phi}^*\colon \E_{b_2} \to \E_{b_1}$, where the image  of a vertical arrow $\nu\in \E_{b_2}$ is given by the unique arrow ${\bbar\phi}^*\nu$ making the following diagram commute:
$$\begin{tikzcd}
\bbar\phi^*e_2\arrow[d,"{\bbar\phi^*\nu}"',dashed]\arrow[r,"{\Cart(\bbar\phi, e_2)}"]&e_2\arrow[d,"\nu"]\\
\bbar\phi^*e_2'\ \ \arrow[r,"{\Cart(\bbar\phi, e_2')}"]&\ \ e_2'.
\end{tikzcd}
$$
It corresponds to the diagram
$$
\begin{tikzcd}[column sep=12pt, row sep=12pt] 
&& {\bbar\phi}^*e_2' \ar{d}{\Cart} \\
{\bbar\phi}^*e_2\arrow[rru,dashrightarrow] \arrow[r] &e_2\arrow[r,"\nu"']& e_2 
\end{tikzcd}
\quad \quad
 \begin{tikzcd}[column sep=12pt, row sep=12pt] 
&& b_1 \arrow[d,"\phi"] \\
b_1\arrow[urr,"{\id}"]\arrow[r,"\phi"'] &b_2\arrow[r,"\id"]& b_2
\end{tikzcd}
$$

The functoriality of ${\bbar\phi}^*$ follows again from unicity. It is called the {\em pullback functor}.

Let $P^\op \colon \E^\op \to \Ba^\op$ be the opposite functor of $P\colon \E \to \Ba$.
\begin{defi}The arrow $\varphi\colon e_1 \to e_2$ in $\E$ is {\em op-Cartesian} for the functor $P$ if the opposite arrow $\varphi^\op$ in $\E^\op$ is Cartesian for $P^\op$. Explicitly, this means that for any given $\beta\in \E(e_1,e)$ and any given $\bbar\alpha\in \Ba(Pe_2,Pe)$ such that $\bbar\alpha\circ P\varphi=P\beta$, there exists a unique $\alpha\in\E(e_2,e)$ such that $\alpha\circ\varphi=\beta$ and $P\alpha=\bbar{\alpha}$, as in the following diagram:
$$\begin{tikzcd}[column sep=12pt, row sep=12pt] 
e_1 \ar[d,"\varphi"']\arrow[r,"\beta"] &e \\
 e_2 \arrow[ur,dashrightarrow,"\alpha"']&  
\end{tikzcd}
\quad \quad
 \begin{tikzcd}[column sep=12pt, row sep=12pt] 
 Pe_1 \arrow[d,"P\varphi"'] \arrow[r,"P\beta"] &Pe \\
Pe_2\arrow[ur,"{\bbar\alpha}"']& 
\end{tikzcd}
$$
\end{defi}

\begin{defi}A functor $P\colon \E \to \Ba$ is an {\em op-fibration} if for any map $\bbar\varphi\colon b_1\to b_2$ in $\Ba$, and for any object $e_1 \in \mathrm{Obj}(\E)$ with $Pe_1=b_1$, there exists an op-Cartesian arrow $\varphi\colon e_1 \to e_2$ such that $P\varphi=\bbar\varphi$.
\end{defi}

Again, this {\em op-Cartesian lifting} is unique up to a unique vertical isomorphism. Then we can choose some particular lifting
$$\opCart(\bbar \varphi, e_1)\colon e_1 \to \bbar\varphi_*e_1$$
so defining a functor
$$\bbar\varphi_*\colon \E_{b_1} \to \E_{b_2}$$
between the fibers.

  \begin{defi}We say that the functor $P\colon \E \to \Ba$ is a {\em bifibration} if it is both a fibration and an op-fibration.
\end{defi}

\begin{rem}
    Observe that the notion of bifibration is more general than the notion of fibration for the model structure on the category of small categories  in \cite{Tanaka_model_structure}. More precisely, our definition allows categories fibered and cofibered over any small category (\cite[Proposition 3.1]{nLab2}), while the one presented in \cite{Tanaka_model_structure} only allows categories fibered and cofibered over groupoids.
\end{rem}

\begin{example}\label{Groupoid}
    Let $\E$ be a groupoid of two elements and $\Ba$ the group $\mathbb{Z}_2$ regarded as categories, i.e, we have the categories:
    $$
\begin{tikzcd}[column sep=12pt, row sep=12pt] 
A \arrow[r, "f", bend left] & B \arrow[l, "g", bend left] &\quad \bullet \arrow["h"', loop, distance=2em, in=125, out=55]
\end{tikzcd}
    $$
    where $h \circ h= \id_{\bullet}$, $f \circ g = \id_A$ and $g 
 \circ f= \id_B$. We define the functor $P \colon \E \rightarrow \Ba$ as the one that takes every object to $\bullet$ and satisfies $P(f)=P(g)=h$. This functor is a bifibration.
\end{example}

\subsection{Homotopy lifting property}

Let $m\in \mathbb{N}$. We define the functor $i_0\colon \CC \to \CC\times \mathbb{I}_m$ which sends each object $c$ to $(c,0)$ and each arrow $f \colon c_1\to c_2$ to $f\times \id_0$.
    The functor $P\colon \E \to \Ba$ satisfies the \emph{ right homotopy lifting property} if for any diagram:
    \begin{equation*}
\begin{tikzcd}
\CC \arrow[d,"i_0"'] \arrow[r,"G"] & \E  \arrow[d,"P"] \\
\CC\times \mathbb{I}_m \arrow[r,"H"']                   & \Ba           
\end{tikzcd}
\end{equation*}
there exists a functor $\wtilde H \colon \CC\times \mathbb{I}_m \to\E$ such that the following diagram is commutative:
\begin{equation*}
\begin{tikzcd}
\CC \arrow[d,"i_m"'] \arrow[r,"G"] & \E  \arrow[d,"P"] \\
\CC\times \mathbb{I}_m \arrow[ur,"\wtilde H",dashed]\arrow[r,"H"']                   & \Ba           
\end{tikzcd}
\end{equation*}

Using Proposition 4.2 and Corollary 4.3 in \cite{TANAKA}, it follows:

\begin{prop}\label{Lifting}
Let $P\colon \E \to \Ba$ be a functor between small categories. It $P$ is a bifibration then it satisfies the right homotopy lifting property. 
\end{prop}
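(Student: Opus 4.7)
The plan is to proceed by induction on the length $m$ of the interval category $\mathbb{I}_m$. For the base case $m=0$, the category $\mathbb{I}_0$ is terminal and $i_0$ is an isomorphism, so $\wtilde H=G\circ i_0^{-1}$ is the only candidate and it trivially satisfies $P\wtilde H=H$.

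For the inductive step, suppose a lift $\wtilde H'\colon \CC\times \mathbb{I}_{m-1}\to \E$ has been produced with $\wtilde H'\circ i_0=G$ and $P\wtilde H'=H|_{\CC\times \mathbb{I}_{m-1}}$. It remains to define $\wtilde H$ on the new objects $(c,m)$, on the single zigzag arrow in $\mathbb{I}_m$ between $m-1$ and $m$, and on the morphisms $(f,\id_m)$ for each $f\in \mathrm{Mor}(\CC)$. Split into two cases according to the orientation of the zigzag arrow. If it points from $m-1$ to $m$, invoke the op-fibration property: for each $c\in \CC$ choose the op-Cartesian lift $\opCart(H(\id_c,m-1\to m),\wtilde H'(c,m-1))$ and take its codomain as $\wtilde H(c,m)$. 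If instead it points from $m$ to $m-1$, invoke the fibration property: for each $c\in \CC$ choose the Cartesian lift $\Cart(H(\id_c,m\to m-1),\wtilde H'(c,m-1))$, whose domain serves as $\wtilde H(c,m)$.

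The principal subtlety is functoriality. Given $f\colon c\to c'$ in $\CC$, the value $\wtilde H(f,\id_m)$ is forced upon us: it is the unique arrow in $\E$ lying over $H(f,\id_m)$ that makes a commutative square with $\wtilde H'(f,\id_{m-1})$ and with the two chosen (op-)Cartesian arrows at $c$ and at $c'$. Existence and uniqueness are exactly the content of the universal properties defining Cartesian and op-Cartesian morphisms. That same uniqueness then transports preservation of identities and composition from $\wtilde H'$ to $\wtilde H$, so $\wtilde H$ is a genuine functor, and it satisfies $P\wtilde H=H$ and $\wtilde H\circ i_0=G$ by construction.

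This one-step extension is precisely the content encoded in Proposition 4.2 and Corollary 4.3 of \cite{TANAKA}, which cover the fibration and op-fibration cases separately; the bifibration hypothesis enables us to apply one or the other at each successive level, and concatenation along the zigzag of $\mathbb{I}_m$ completes the proof. The hard part of the argument lies in this functoriality check: no element of the choice is arbitrary once the first lift at each object is fixed, and this rigidity is the mechanism by which naturality squares automatically commute.
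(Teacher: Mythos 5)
Your proof is correct and, as you yourself note at the end, it is exactly the argument the paper delegates to the cited reference: the paper's own proof is nothing more than the appeal to Proposition 4.2 and Corollary 4.3 of \cite{TANAKA}, which respectively supply the one-step lift along a Cartesian (fibration) and an op-Cartesian (op-fibration) leg of the zigzag. Your inductive concatenation along $\mathbb{I}_m$, and especially the observation that the (op-)Cartesian universal property forces $\wtilde H(f,\id_m)$ and hence makes the naturality squares commute automatically, is a faithful and more explicit rendering of what that citation is invoking.
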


The next result, Proposition \ref{EQUIVALE}, will allow us to speak about the homotopy invariants of ``the fiber'' of a bifibration.

\begin{prop}[{\cite[Proposition 4.4]{TANAKA}}]\label{EQUIVALE}
Let $P\colon\E \rightarrow \Ba$ be a bi-fibration.  If $b_1$ and $b_2$ are two objects in $\Ba$ such that   there is a morphism $u\colon b_1 \to b_2$, then there is a categorical equivalence between the fibers  $\E_{b_1}$ and $\E_{b_2}$.
\end{prop}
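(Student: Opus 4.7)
The plan is to use the bifibration structure of $P$ to build functors $\E_{b_1} \rightleftarrows \E_{b_2}$ and exhibit them as mutually quasi-inverse.

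Since $P$ is a fibration, for each $e \in \E_{b_2}$ pick a Cartesian lift $\Cart(u,e)\colon u^*e\to e$ of $u$ with codomain $e$; the universal property of Cartesian arrows turns this choice of objects into a well-defined pullback functor $u^*\colon \E_{b_2}\to \E_{b_1}$ whose action on a vertical arrow $\nu\colon e\to e'$ is the unique vertical arrow $u^*\nu$ making the square with $\Cart(u,e),\Cart(u,e')$ commute. Dually, the op-fibration structure of $P$ gives a pushforward functor $u_*\colon \E_{b_1}\to \E_{b_2}$ built from op-Cartesian lifts $\opCart(u,e')\colon e'\to u_*e'$.

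Next, I would construct a unit $\eta\colon \id_{\E_{b_1}}\Rightarrow u^*u_*$ and counit $\epsilon\colon u_*u^*\Rightarrow \id_{\E_{b_2}}$ by combining the two universal properties. For $e'\in \E_{b_1}$, both $\opCart(u,e')\colon e'\to u_*e'$ and $\Cart(u,u_*e')\colon u^*(u_*e')\to u_*e'$ project to $u$ in $\Ba$; applying the Cartesian property of $\Cart(u,u_*e')$ to the morphism $\opCart(u,e')$ together with $\id_{b_1}$ yields a unique vertical arrow $\eta_{e'}\colon e'\to u^*(u_*e')$ satisfying $\Cart(u,u_*e')\circ\eta_{e'}=\opCart(u,e')$. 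Naturality of $\eta$ follows by uniqueness. The counit $\epsilon$ is produced by a symmetric argument that swaps the roles of the Cartesian and op-Cartesian structures.

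The main obstacle is showing that $\eta$ and $\epsilon$ are natural \emph{isomorphisms}, since for a generic fibered situation this would only give an adjunction. The candidate inverse of $\eta_{e'}$ is obtained by applying the op-Cartesian property of $\opCart(u,e')$ to the Cartesian arrow $\Cart(u,u_*e')$ together with $\id_{b_1}$, producing a unique vertical arrow $u^*(u_*e')\to e'$. Composing the two candidate arrows in either order yields a vertical arrow that fits two different factorizations of $\opCart(u,e')$ (resp.\ of $\Cart(u,u_*e')$); the uniqueness clauses in the Cartesian and op-Cartesian definitions then force both composites to be identities, and the same recipe works for $\epsilon$.

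An alternative, more synthetic route would be to apply Proposition~\ref{Lifting} directly: lift the homotopy $H\colon \E_{b_1}\times \mathbb{I}_1 \to \Ba$ which is constant on $\E_{b_1}$ and sends the arrow $0\to 1$ to $u$ along the inclusion $\E_{b_1}\hookrightarrow \E$; the value of the lift at the endpoint $1$ factors through $\E_{b_2}$ and gives a functor $\E_{b_1}\to\E_{b_2}$. Doing the same for the opposite bifibration $P^\op$ produces the reverse functor, and the lifted homotopies themselves provide the natural isomorphisms witnessing the equivalence.
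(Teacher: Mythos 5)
The paper does not prove Proposition~\ref{EQUIVALE}: it is quoted from \cite[Prop.~4.4]{TANAKA}, so there is no internal argument to compare against.

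Your construction correctly produces the pullback $u^*\colon\E_{b_2}\to\E_{b_1}$, the pushforward $u_*\colon\E_{b_1}\to\E_{b_2}$, and the unit $\eta$ and counit $\epsilon$ of the adjunction $u_*\dashv u^*$ that a bifibration yields between its fibers. The gap is exactly at the ``main obstacle'' you name. The proposed inverse of $\eta_{e'}$ does not type-check: the op-Cartesian universal property of $\opCart(u,e')\colon e'\to u_*e'$ takes as input a morphism with \emph{domain} $e'$ and returns a morphism with \emph{domain} $u_*e'$, whereas $\Cart(u,u_*e')$ has domain $u^*(u_*e')\neq e'$, and the desired output $u^*(u_*e')\to e'$ has the wrong source and target for that property. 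Nor can this be fixed: the Grothendieck construction of an adjunction $L\dashv R$ between small categories is a bifibration over $\mathbb{I}_1=\{0\to1\}$ whose fibers are the two categories and whose $\eta,\epsilon$ are the unit and counit of $L\dashv R$, so any small non-equivalence adjunction (e.g.\ between the terminal category and the poset $\{0<1\}$) gives a bifibration with non-equivalent fibers. What the bifibration structure does yield is a \emph{homotopy} equivalence of the fibers in Lee's sense, since $\eta$ and $\epsilon$, being natural transformations, are homotopies of length one; that is what the paper actually needs in order to ``speak about the homotopy invariants of the fiber.'' Your synthetic route via Proposition~\ref{Lifting} produces exactly such homotopies, but its last sentence overclaims in the same way: the lifted homotopies are natural transformations, not natural isomorphisms. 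Either Tanaka's Proposition~4.4 carries hypotheses beyond the paper's definition of bifibration (the claim does hold for coverings), or ``categorical equivalence'' should be read as homotopy equivalence of small categories; in either case the step that inverts $\eta$ is the one that cannot be repaired.
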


\subsection{Pullbacks} We now prove that the pullback of a bifibration is a bifibration.
Let $P\colon \E \to \Ba$ be a bifibration and $F\colon \Ba'\to \Ba$ a functor between small categories. Consider the diagram:
 \begin{equation}\label{eq:diagram_pullback}
     \begin{tikzcd}[column sep=12pt, row sep=12pt] 
& \E \arrow[d, "P"] \\
\Ba' \arrow[r, "F"] & \Ba               
\end{tikzcd}
 \end{equation}
The pullback of $P$ by $F\colon \Ba'\to \Ba$ is the limit in \textbf{Cat} of Diagram (\ref{eq:diagram_pullback}):
\begin{equation*}
    \begin{tikzcd}[column sep=12pt, row sep=12pt] 
\E' \arrow[d, "P'"] \arrow[r, "F'"] & \E \arrow[d, "P"] \\
\Ba' \arrow[r, "F"]                  & \Ba               
\end{tikzcd}
\end{equation*}

  The category $\E'$ is the subcategory of $\Ba' \times \E$ with objects $(C,D)$ such that $F(C)=P(D)$ and morphisms $(f,g)$ such that $F(f)=P(g)$. 

\begin{prop}
    The pullback of a bifibration is a bifibration, i.e., if we have the following diagram:
    $$
\begin{tikzcd}[column sep=12pt, row sep=12pt] 
\E' \arrow[d, "P'"'] \arrow[r, "F'"] & \E \arrow[d, "P"] \\
\Ba' \arrow[r, "F"]                  & \Ba               
\end{tikzcd}
    $$
    with $P$ a bifibration and $\E'$ the pullback of $P$, then $P'$ is also a bifibration.
\end{prop}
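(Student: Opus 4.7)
The plan is to verify the two halves of the bifibration condition separately, constructing lifts componentwise by combining the given data in $\Ba'$ with a lift in $\E$ produced by the bifibration $P$.

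First I would prove $P'$ is a fibration. Given a morphism $\bar\phi' \in \Ba'(b_1', b_2')$ and an object $e_2' = (b_2', e_2)$ of $\E'$ with $P'(e_2') = b_2'$, apply $F$ to get $F(\bar\phi') \in \Ba(F(b_1'), F(b_2'))$, and note $F(b_2') = P(e_2)$. Since $P$ is a fibration, there exists a Cartesian lift $\phi \colon e_1 \to e_2$ with $P(\phi) = F(\bar\phi')$. This gives a pair $e_1' := (b_1', e_1)$ lying in $\E'$ (since $F(b_1') = P(e_1)$) and a morphism $\phi' := (\bar\phi', \phi)$ in $\E'$ (since $F(\bar\phi') = P(\phi)$). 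I would then show $\phi'$ is Cartesian for $P'$: given $\beta' = (\beta_0, \beta_1) \in \E'(e', e_2')$ with $e' = (c,d)$, and $\bar\alpha' \in \Ba'(c, b_1')$ such that $\bar\phi' \circ \bar\alpha' = \beta_0$, one checks that $P(\phi) \circ F(\bar\alpha') = F(\bar\phi' \circ \bar\alpha') = F(\beta_0) = P(\beta_1)$, so the Cartesian property of $\phi$ in $\E$ yields a unique $\alpha_1 \in \E(d, e_1)$ with $P(\alpha_1) = F(\bar\alpha')$ and $\phi \circ \alpha_1 = \beta_1$. Then $\alpha' := (\bar\alpha', \alpha_1)$ is a morphism in $\E'$ satisfying the required conditions, and uniqueness of $\alpha_1$ forces uniqueness of $\alpha'$.

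Next I would prove $P'$ is an op-fibration by the dual argument. Given $\bar\varphi' \in \Ba'(b_1', b_2')$ and $e_1' = (b_1', e_1) \in \E'$ with $P'(e_1') = b_1'$, use that $P$ is an op-fibration to produce an op-Cartesian lift $\varphi \colon e_1 \to e_2$ of $F(\bar\varphi')$ with domain $e_1$. Define $e_2' := (b_2', e_2)$ and $\varphi' := (\bar\varphi', \varphi)$; the op-Cartesian universal property of $\varphi'$ for $P'$ then follows by the mirror of the argument above, extracting the $\E$-component of an arrow from the op-Cartesian property of $\varphi$ and taking its $\Ba'$-component as the given one.

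The work is essentially all bookkeeping of pullback compatibilities $F(\cdot) = P(\cdot)$ and the universal properties of Cartesian and op-Cartesian morphisms. The only step that requires any thought is the componentwise decomposition of a candidate factorization $\alpha' = (\alpha_0, \alpha_1)$: the $\Ba'$-component is forced to be $\bar\alpha'$ by the condition $P'(\alpha') = \bar\alpha'$, and the equation $\phi' \circ \alpha' = \beta'$ then reduces to a factorization problem in $\E$, which is settled by the (op-)Cartesian property of the lift. No serious obstacle is expected.
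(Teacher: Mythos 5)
Your proposal is correct and takes essentially the same route as the paper: construct the lift componentwise by applying $F$, producing a (op-)Cartesian lift in $\E$ via the bifibration $P$, pairing it with the given arrow in $\Ba'$, and settling the universal property by reducing a factorization in $\E'$ to a factorization in $\E$. The paper does the op-fibration case first and leaves the fibration case as analogous, whereas you do the reverse order, but the argument is the same; your write-up is, if anything, a bit more careful about distinguishing Cartesian from op-Cartesian lifts.
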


\begin{proof}
First, we will prove that $P'$ is an op-fibration.  Take any morphism $f\co C \rightarrow D$ in $\Ba'$ and fix a lift of $C$. This means that we take a pair $(C,X)$ in $\E$. We can lift $f$ to the morphism $(f,\hat{f}\,)$ with $\hat{f}$ a Cartesian lift of $F(f)$ with $X$ as the domain. This can be done since $P(X)=F(C)$. 

We claim that this morphism is a Cartesian lift of $f$. Indeed, take the following situation:
    $$
\begin{tikzcd}
{(C,X)} \arrow[d, "{(f,\hat{f})}"'] \arrow[r, "{(g,l)}"] & {(A,Y)} & C \arrow[d, "f"'] \arrow[r, "g"] & A \\
{(D,E)}                                                  &         & D \arrow[ru, "h"']              &  
\end{tikzcd}
    $$
    We can lift $h$ to $(h,\hat{h}\,)$ where $\hat{h}$ is the lift of $F(h)$ that has the lifting property. That means that we have the following situation: 
    $$
\begin{tikzcd}
X \arrow[d, "\widehat{f}"'] \arrow[r, "l"] & Y & F(C) \arrow[d, "F(f)"'] \arrow[r, "F(g)"] & F(A) \\
E \arrow[ru, "\hat{h}"', dotted]       & \quad  & F(D) \arrow[ru, "F(h)"']                 &     
\end{tikzcd}
$$
Proving that $P$ is a fibration is totally analogous.
\end{proof}

\subsection{Coverings}

In this subsection we prove that coverings of small categories are examples of bifibrations. 

Let $\CC$ be a small category and let $C$ be an object of $\CC$. We define the sets:
    $$\mathcal{U}_C=\{f \in \mathrm{Mor\, } \CC \, \colon \mathrm{Codom}(f)=C\}$$
    and
    $$\mathcal{F}_C=\{f \in \mathrm{Mor\, } \CC \, \colon \mathrm{Dom}(f)=C\}.$$
    In other words, $\mathcal{U}_C$ is the set of morphisms that have $C$ as their target and $\mathcal{F}_C$ is the set of morphisms that have $C$ as its  source .

Any  functor  $P\colon \CC \to \D$ between small categories induces maps $P_{\mathcal{U}_C} \colon \mathcal{U}_C \rightarrow \mathcal{U}_{P(C)}$ and $P_{\mathcal{F}_C} \colon \mathcal{F}_C \rightarrow \mathcal{F}_{P(C)}$. Both maps are defined by the property that if we have a morphism $f$ with domain or codomain $C$ then the morphism $P(f)$ has the object $P(C)$ as its domain or codomain.

\begin{defi}[\cite{BARMIN}]
    A functor $P \colon \E \rightarrow \Ba$ is a {\em covering} if its surjective on objects and for every object $C$ the morphisms $P_{\mathcal{U}_C}$ and $P_{\mathcal{F}_C}$ are bijective.
\end{defi}

\begin{prop}
   Every covering is a bifibration.
\end{prop}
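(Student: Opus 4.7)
The plan is to verify the two axioms of a bifibration by exploiting the bijections $P_{\mathcal{U}_C}$ and $P_{\mathcal{F}_C}$ that define a covering; the two verifications are dual, so I will describe the fibration part in detail.

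First, given an arrow $\bbar\phi\colon b_1\to b_2$ in $\Ba$ and an object $e_2\in\E$ with $P(e_2)=b_2$, I produce a candidate lift by noting that $\bbar\phi\in\mathcal{U}_{b_2}$ and using the bijection $P_{\mathcal{U}_{e_2}}\colon \mathcal{U}_{e_2}\to \mathcal{U}_{b_2}$ to obtain a unique $\phi\in\mathcal{U}_{e_2}$ with $P(\phi)=\bbar\phi$. Write $\phi\colon e_1\to e_2$; the equality $P(e_1)=b_1$ is automatic.

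The core step is to verify that $\phi$ is Cartesian. Given $\beta\in\E(e,e_2)$ and $\bbar\alpha\in\Ba(P(e),P(e_1))$ with $P(\phi)\circ\bbar\alpha=P(\beta)$, I apply the bijection $P_{\mathcal{F}_e}$ to $\bbar\alpha\in\mathcal{F}_{P(e)}$ and obtain a unique $\alpha\in\mathcal{F}_e$ with $P(\alpha)=\bbar\alpha$, say $\alpha\colon e\to e'$ with $P(e')=P(e_1)$. The identification $e'=e_1$ requires both bijections: using $P_{\mathcal{F}_{e'}}$ I lift $\bbar\phi$ starting from $e'$ to an arrow $\phi'\colon e'\to e''$ with $P(\phi')=\bbar\phi$, so that $\phi'\circ\alpha$ and $\beta$ both lie in $\mathcal{F}_e$ with common image $\bbar\phi\circ\bbar\alpha=P(\beta)$; the bijection $P_{\mathcal{F}_e}$ forces $\phi'\circ\alpha=\beta$, and in particular $e''=e_2$. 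Then $\phi'\in\mathcal{U}_{e_2}$ is a lift of $\bbar\phi$, so the bijection $P_{\mathcal{U}_{e_2}}$ gives $\phi'=\phi$, hence $e'=e_1$ and $\phi\circ\alpha=\beta$. Uniqueness of $\alpha$ with $P(\alpha)=\bbar\alpha$ is immediate from $P_{\mathcal{F}_e}$.

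The op-fibration condition follows by a dual argument: given $\bbar\varphi\colon b_1\to b_2$ and $e_1$ with $P(e_1)=b_1$, use the bijection $P_{\mathcal{F}_{e_1}}$ to produce the op-Cartesian lift $\varphi\colon e_1\to e_2$, and verify its universal property by interchanging the roles of $\mathcal{U}$ and $\mathcal{F}$ throughout. The main subtlety in either part is the bookkeeping of source and target objects across fibers, and it is precisely this issue that forces the simultaneous use of both bijections in the argument above.
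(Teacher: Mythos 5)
Your proof is correct, but it takes a different route from the paper's. For the fibration half, the paper lifts $\bbar\alpha$ (the base arrow $h$ in the paper's notation) with \emph{codomain} fixed, via the bijection $P_{\mathcal{U}_{e_1}}$; the lift $\alpha$ then lands in $e_1$ by construction, so the composite $\phi\circ\alpha$ is immediately defined and lies in $\mathcal{U}_{e_2}$ alongside $\beta$, and the bijection $P_{\mathcal{U}_{e_2}}$ forces $\phi\circ\alpha=\beta$ (which in particular forces $\mathrm{dom}(\alpha)=e$). You instead lift $\bbar\alpha$ with \emph{domain} fixed via $P_{\mathcal{F}_e}$, which leaves the codomain $e'$ uncontrolled and necessitates the extra bookkeeping you correctly carry out: re-lifting $\bbar\phi$ from $e'$ via $P_{\mathcal{F}_{e'}}$, collapsing $\phi'\circ\alpha$ to $\beta$ via $P_{\mathcal{F}_e}$, and then identifying $\phi'$ with $\phi$ via $P_{\mathcal{U}_{e_2}}$. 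Both arguments are valid, but the paper's choice is the cleaner one, since each half of the bifibration claim is handled by a single family of bijections ($\mathcal{U}$ for the fibration, $\mathcal{F}$ for the op-fibration). Your closing remark that the bookkeeping \emph{forces the simultaneous use of both bijections} is therefore an artifact of your particular lifting strategy rather than a genuine necessity.
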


\begin{proof}
    First let us see that $P$ is a fibration. Let $f \colon C \rightarrow D$ be a morphism in $\Ba$.  If we fix an element $\widehat{D}$ such that $P(\widehat{D})=D$, we can lift $f$ using the fact that $P_{\mathcal{U}_{\widehat{D}}}$ is a bijection between $\mathcal{U}_{\widehat{D}}$ and $\mathcal{U}_{P(\widehat{D})}=\mathcal{U}_D$. We will denote that lift of $f$ by $\widehat{f}$ and the source of this map by $\widehat{C}$, so $\widehat{f} \colon \widehat{C} \rightarrow \widehat{D}$.  Now suppose that we have the following situation :
    $$
\begin{tikzcd}[column sep=12pt, row sep=12pt] 
& \widehat{C} \arrow[d, "\widehat{f}"] &                                 & C \arrow[d, "f"] \\
\widehat{E} \arrow[r, "\widehat{g}"'] &\widehat{D}                    & E \arrow[r, "g"'] \arrow[ru, "h"] & D               
\end{tikzcd}
    $$
    We must define $\widehat{h} \colon\widehat{E} \rightarrow\widehat{C}$ such that when we put it in the left diagram we get a commutative diagram and $P(\widehat{h})=h$. The lift $\widehat{h}$ exists because we have a bijection $P_{\mathcal{U}_{\widehat{C}}}$between $\mathcal{U}_{\widehat{C}}$ and $\mathcal{U}_{P(\widehat{C})}=\mathcal{U}_C$. Moreover we have that $\widehat{f} \circ \widehat{h}= \widehat{g}$. On the one hand, we have that $\widehat{f} \circ \widehat{h}= P^{-1}_{\mathcal{U}_{\widehat{D}}}(f) \circ P^{-1}_{\mathcal{U}_{\widehat{C}}}(h)$. On the other hand, if we apply $P$ to the previous term we get:
    $$P(P^{-1}_{\mathcal{U}_{\widehat{D}}}(f) \circ P^{-1}_{\mathcal{U}_{\widehat{C}}}(h))=P(P^{-1}_{\mathcal{U}_{\widehat{D}}}(f)) \circ P(P^{-1}_{\mathcal{U}_{\widehat{C}}}(h))=f \circ h= g.$$ Now we can prove that $\hat{f} \circ \widehat{h}=\widehat{g}$ since the bijection between $\mathcal{U}_{\widehat{D}}$ and $\mathcal{U}_D$ gives us the wanted equality, because $P(\widehat{g})=g=P(\widehat{f} \circ \widehat{h})$.

    Proving that $P$ is also an op-fibration is totally analogous and follows from the fact that we have the bijection $P_{\mathcal{F}_C}$.
\end{proof}

\begin{rem}
Note that the definition of  covering is equivalent  to the one given in \cite{Tanaka_model_structure} for functors surjective on objects since a discrete groupoid is the same as a discrete category.
\end{rem}

\begin{example}\label{Doblecir}
 Let $\Si$ be the category:
    $$
\begin{tikzcd}[column sep=12pt, row sep=12pt] 
C \arrow[r, "\alpha", bend left] \arrow[r, "\beta"', bend right] & D
\end{tikzcd}
    $$
 of Subsection \ref{subsec:example}   and let $\E$ be the category:
    $$
\begin{tikzcd}[column sep=12pt, row sep=12pt] 
& D_1 &     \\
C_1 \arrow[ru, "\alpha_1"] \arrow[rd, "\beta_1"'] &     & C_2 \arrow[lu, "\beta_2"'] \arrow[ld, "\alpha_2"] \\
    & D_2 &                                                  
\end{tikzcd}.
$$

The functor $P \colon \E \rightarrow \Si$ defined by $P(C_i)=C$, $P(D_i)=D$, $P(\alpha_1)=\alpha$ and $P(\beta_i)=\beta$ is a covering  and therefore a bifibration.
\end{example}

\subsection{\v{S}varc genus and sectional number}

The aim of this section is to introduce for the first time the  notions of sectional number and \v{S}varc genus of a functor; to show that they coincide for bifibrations; to prove Theorem \ref{thm:main_thm}  and to show it in an example.

\begin{defi}
    Let $P\co \CC \rightarrow \Dc$ be a functor. We define its \emph{sectional number}, denoted by $\sC (P)$, as  the least integer $n\geq 0$ such that there is a geometric cover $\{\Uc_0, \ldots, \Uc_n \}$ of $\Dc$ by subcategories that admit local sections, that is, such that for every $i \in \{0, \ldots, n\} $ there is a functor $s_i \colon \Uc_i \rightarrow \CC$ which is a local right inverse of $F$,  in the sense that $P \circ s_i = \iota_i$, where $\iota_i \colon \Uc_i \rightarrow \Dc$ is the inclusion.
\end{defi}

\begin{example}\label{Svar_ejemplo}
    In the case of the bifibration defined in Example \ref{Doblecir} there is no global section. Otherwise  we would have $P(C)=C_i$ and $P(D)=D_j$; but if we have a lift of $\alpha$ we can not have a lift for $\beta$ and vice versa. Nonetheless there are local sections for the subcategories:
$$
\begin{tikzcd}
C \arrow[r, "\alpha"] & D 
&\text{and}& C \arrow[r, "\beta"]  & D.
\end{tikzcd}
$$
So we can conclude that $\sC(P)=1$.
\end{example}

\begin{defi}
    Let $F\co \CC \rightarrow \Dc$ be a functor, we define its {\em \v{S}varc genus}, denoted by $\sG (P)$, as the least  integer $n\geq 0$ such that there is a geometric cover $\{\Uc_0, \ldots, \Uc_n \}$ of $\CC$ by subcategories that admit local homotopic sections, i.e., such that for every $i \in \{0, \ldots, n\} $ there is a functor $s_i \colon \Uc_i \rightarrow \Dc$ that is a local right homotopic inverse of $F$, in the sense that $P \circ s_i \simeq \iota_i$ where $\iota_i \colon \Uc_i \rightarrow \CC$ is the inclusion.
\end{defi}

\begin{prop}\label{SG=SC}
    For every bifibration $P\co\E \rightarrow \Ba$ we have that
    $$\sG(P)=\sC(P).$$
\end{prop}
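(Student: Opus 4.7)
The inequality $\sG(P)\leq \sC(P)$ is immediate from the definitions, since a strict local section $s_i$ with $P\circ s_i = \iota_i$ is in particular a homotopic local section $P\circ s_i \simeq \iota_i$. So both covers (by subcategories admitting strict or homotopic sections) are counted and the minimal cardinality for the strict case bounds the one for the homotopic case from above.

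The substantive inequality is $\sC(P)\leq \sG(P)$, and the plan is to use the homotopy lifting property granted by Proposition \ref{Lifting}. Suppose $\sG(P)=n$, so there exists a geometric cover $\{\Uc_0,\ldots,\Uc_n\}$ of the base together with functors $s_i\colon \Uc_i\to \E$ satisfying $P\circ s_i\simeq \iota_i$, where $\iota_i\colon \Uc_i\hookrightarrow \Ba$ denotes the inclusion. For each index $i$ fix a homotopy $H^i\colon \Uc_i\times \mathbb{I}_{m_i}\to \Ba$ realizing $P\circ s_i\simeq \iota_i$ (after possibly reindexing the interval, we may assume the homotopy starts at $P\circ s_i$, i.e.\ $H^i\circ i_0 = P\circ s_i$ and $H^i\circ i_{m_i}=\iota_i$).

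We now apply the right homotopy lifting property (Proposition \ref{Lifting}), which is available because $P$ is a bifibration, to the square
\begin{equation*}
\begin{tikzcd}
\Uc_i \arrow[d,"i_0"'] \arrow[r,"s_i"] & \E  \arrow[d,"P"] \\
\Uc_i\times \mathbb{I}_{m_i} \arrow[r,"H^i"'] & \Ba.
\end{tikzcd}
\end{equation*}
This yields a functor $\widetilde{H}^i\colon \Uc_i\times\mathbb{I}_{m_i}\to \E$ such that $P\circ \widetilde{H}^i=H^i$ and $\widetilde{H}^i\circ i_0=s_i$. Define $\widetilde{s}_i\coloneqq \widetilde{H}^i\circ i_{m_i}\colon \Uc_i\to \E$. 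Then
\begin{equation*}
P\circ \widetilde{s}_i = P\circ \widetilde{H}^i\circ i_{m_i}=H^i\circ i_{m_i}=\iota_i,
\end{equation*}
so $\widetilde{s}_i$ is a strict local section of $P$ over $\Uc_i$. The same cover $\{\Uc_0,\ldots,\Uc_n\}$ therefore witnesses $\sC(P)\leq n=\sG(P)$.

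The only real subtlety is ensuring that the direction of the lifting statement (Proposition \ref{Lifting}) is compatible with the placement of the strict section at the endpoint of the lifted homotopy. This is where one must be careful with the zigzag convention of $\mathbb{I}_m$: if the given homotopy begins with $P\circ s_i$, one lifts to obtain a zigzag in $\E$ whose other endpoint is the desired strict section; if instead the homotopy is presented the other way round, one applies the property to its reverse. Apart from this bookkeeping the argument is a direct transcription of the classical topological proof, with the categorical homotopy lifting property of bifibrations playing the role of the Hurewicz lifting property.
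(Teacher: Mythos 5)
Your proof is correct and follows essentially the same strategy as the paper's: the easy inequality $\sG(P)\leq\sC(P)$ is immediate from the definitions, and for the converse you invoke the right homotopy lifting property (Proposition \ref{Lifting}) to lift each homotopy witnessing $P\circ s_i\simeq\iota_i$ and extract a strict section from the endpoint of the lift. Your extra remarks about the zigzag bookkeeping in $\mathbb{I}_m$ are a welcome precision, but do not change the argument.
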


\begin{proof}
    We always have that $\sG(P) \leq \sC(P)$ since a local section is also a homotopic section. To prove the other inequality take any geometric cover $\{\Uc_0, \ldots, \Uc_n \}$ with local homotopic sections $s_i$. For each $i \in \{0,\ldots,n\}$ we have a homotopy $H \colon \Uc_i \times \mathbb{I}_m \rightarrow \Ba$ such that $H_0= P\circ s_i,H_m= \iota_i$ and the following diagram commutes:
    $$
\begin{tikzcd}
\Uc_i \arrow[r, "s_i"] \arrow[d, "i_0"']  & \E \arrow[d, "P"] \\
\Uc_i \times \mathbb{I}_m \arrow[r, "H"'] & \Ba              
\end{tikzcd}
    $$
    Now by the right homotopy lifting property (Proposition \ref{Lifting}) there is a homotopy $\wtilde H  \colon \Uc_i \times \mathbb{I}_m \rightarrow \E$ such that $P \circ \wtilde H _m= H_m= \iota_i$, so $\wtilde H_m$ is a local section of $P$.
\end{proof}

\begin{example}
The bifibration defined in Example \ref{Groupoid}
has no global section since for every functor $S \colon \Ba \rightarrow \E$ we have that $S(h)=\id_{P(\bullet)}$ because there are no other endomorphisms from $P(\bullet)$ into itself. Hence we can not have a section because
$$P\circ S(h)=P(S(h))=P(\id_{P(\bullet)})=\id_\bullet \neq h.$$
In this case the only subcategories of $\Ba=\mathbb{Z}_2$ are the trivial one and the total one, so if we want a geometric cover it must contain the total category. Therefore we can conclude that $\sC(P)=\infty$. 
\end{example}

 We now prove the main result of the paper.

\begin{teo}[\v{S}varc inequality]\label{thm:main_thm}
Let $P \colon \E \rightarrow \Ba$ be a bifibration and let $D$ be a natural system in $\Ba$ with an endopairing. The cup-length in $\ker{P^*}$, denoted by $\cpl(\ker{P^*})$, is a lower bound for the \v{S}varc genus of $P$:  $$\cpl (\ker{P^*}) \leq \sG(P).$$
\end{teo}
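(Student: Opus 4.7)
The plan is to adapt the classical cup-length argument from topology to the categorical setting. Assume $\sG(P)=n$, so by definition there is a geometric cover $\{\Uc_0,\dots,\Uc_n\}$ of $\Ba$ together with local homotopic sections $s_i\colon \Uc_i\to \E$ satisfying $P\circ s_i\simeq \iota_i$, where $\iota_i\colon \Uc_i\hookrightarrow \Ba$ denotes the inclusion. It suffices to show that any $n+1$ cohomology classes $\xi_0,\dots,\xi_n\in\ker P^*$ of positive degree satisfy $\xi_0\smile\cdots\smile\xi_n=0$.

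The first step is to establish $\iota_i^*\xi_i=0$ for every $i\in\{0,\dots,n\}$. By functoriality of Baues--Wirsching cohomology combined with homotopy invariance (homotopic functors in the $\mathbb{I}_m$ sense should induce the same map in cohomology once the pulled-back natural systems are canonically identified), one has $\iota_i^*=(P\circ s_i)^*=s_i^*\circ P^*$. Since $\xi_i\in\ker P^*$, this composition annihilates $\xi_i$.

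Next, for each index $i$, the long exact sequence \eqref{SHORT} of the pair $(\Ba,\Uc_i)$ yields a relative class $\tilde\xi_i\in \Ho^{\ast}(\Ba,\Uc_i;D)$ with $\gamma_i(\tilde\xi_i)=\xi_i$. By Proposition \ref{VARIOS}, the iterated relative cup product
$$\tilde\xi_0\smile\cdots\smile\tilde\xi_n\in \Ho^{\ast}(\Ba,\Uc_0\cup\cdots\cup\Uc_n;D)$$
is well defined. But $\{\Uc_0,\dots,\Uc_n\}$ is a geometric cover of $\Ba$, so $\Uc_0\cup\cdots\cup\Uc_n=\Ba$ and the cochain complex $\F^{\bullet}(\Ba,\Ba;D)$ is the zero complex; consequently, the relative cup product above vanishes. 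Applying Lemma \ref{Natural} then gives
$$\xi_0\smile\cdots\smile\xi_n=\gamma_0(\tilde\xi_0)\smile\cdots\smile\gamma_n(\tilde\xi_n)=\gamma(\tilde\xi_0\smile\cdots\smile\tilde\xi_n)=0,$$
whence $\cpl(\ker P^*)\leq n=\sG(P)$.

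The main technical obstacle is the homotopy-invariance step, since a priori $(P\circ s_i)^*D$ and $\iota_i^*D$ are distinct natural systems on $\F\Uc_i$; establishing $\iota_i^*=s_i^*\circ P^*$ in cohomology therefore requires a canonical identification of these coefficient systems induced by the homotopy functor $H\colon \Uc_i\times\mathbb{I}_m\to\Ba$, together with a prism-type chain-homotopy computation in the Baues--Wirsching complex. Once this ingredient is in place, the rest of the argument is a direct assembly of the long exact sequence, the relative cup product of Proposition \ref{VARIOS}, and the naturality statement in Lemma \ref{Natural}.
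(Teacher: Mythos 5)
Your overall architecture matches the paper's: lift classes in $\ker P^*$ to relative classes via the exact sequence \eqref{SHORT}, multiply them with the relative cup product of Proposition \ref{VARIOS}, observe that the target $\Ho^{\bullet}(\Ba,\Ba;D)$ vanishes, and descend with Lemma \ref{Natural}. That part of your argument is correct and is exactly what the paper does.

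The gap is in your first step, and you have correctly identified where it lies but not resolved it. You try to deduce $\iota_i^*=s_i^*\circ P^*$ from $P\circ s_i\simeq\iota_i$ by invoking a homotopy-invariance principle for Baues--Wirsching cohomology. But, as you yourself note, $(P\circ s_i)^*D$ and $\iota_i^*D$ are a priori different natural systems on $\F\Uc_i$; there is no statement in the paper that homotopic functors induce equal maps in this cohomology after a canonical identification of coefficients, and no ``prism-type'' chain homotopy is constructed anywhere. Leaving this as an acknowledged obstacle means the proof is incomplete. The paper sidesteps the issue entirely: since $P$ is a bifibration, Proposition \ref{SG=SC} gives $\sG(P)=\sC(P)$, so one may choose a geometric cover by subcategories with \emph{strict} local sections, $P\circ s_j=\iota_j$ exactly. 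Then $\iota_j^*=s_j^*\circ P^*$ is an immediate consequence of the functoriality $(G\circ F)^*=F^*\circ G^*$ already proved in Section \ref{sec: COH}, with no homotopy invariance required. You should replace your homotopy-invariance step with an appeal to Proposition \ref{SG=SC}; the rest of your argument then goes through verbatim.
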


\begin{proof}
    We assume that the \v{S}varc genus is finite since otherwise the result is trivial. By Proposition \ref{SG=SC} we know that $\sG(P)=\sC(P)$ so we can assume that we have local sections and not only local homotopic sections. Let $\{\Uc_0,\ldots,\Uc_n\}$ be a geometric cover of $\Ba$ such that every $\Uc_j$ has a local section $s_j$.   Let $\{f_0\},\ldots,\{f_n\}$ be cohomology classes in $\ker P^* \subset \Ho(\Ba;D)$. We  have  $\iota_j=P\circ s_j$  for each $j \in\{0,\ldots,n\}$ and therefore $\iota_j^*=(P\circ s_j)^*=s_j^* \circ P^*$; thus, every class $\{f_j\}$ is in $\ker \iota_j^*$ since it is in $\ker P^*$.

    Using the exact sequence  \eqref{SHORT}:
    $$\Ho^n(\Ba,\Uc_j;D) \xrightarrow{\gamma_j} \Ho^n(\Ba;D) \xrightarrow{\iota_j^*} \Ho^n(\Uc_j;D)$$
we have that every element $\{f_j\}$ is of the form $\{f_j\}=\gamma_j(\{g_j\})$.

    Now if we use the relative cup product defined in Proposition \ref{rel-cup} and using the fact that we have a geometric cover, it follows  that there is a morphism:
    $$\smile\co \Ho(\Ba,\Uc_0;D) \otimes \cdots \otimes \Ho(\Ba,\Uc_n;D) \rightarrow \Ho(\Ba, \Uc_1 \cup \cdots \cup \Uc_n;D).$$
    Moreover we have that $$\Ho(\Ba, \Uc_1 \cup \cdots \cup \Uc_n;D)=\Ho(\Ba,\Ba;D)=0.$$
    
    This entails the wanted result using Lemma \ref{Natural}. Indeed, we know that $$\{f_0\}\smile \ldots \smile \{f_n\}=\{f_0\smile \ldots \smile f_n\}=0.\qedhere$$ 
\end{proof}

We will now examine two examples in which the inequality is applied. In the first, the inequality becomes an equality, whereas in the second, it remains a strict inequality.

\begin{example}
    We know that in the case of Example 
    \ref{Doblecir} the \v{S}varc genus is $1$ as we showed in Example \ref{Svar_ejemplo}. Let us see an example of a natural system in $\Si$ such that $\cpl (\ker P^*)=\sG(P)$. 
    
    Indeed, take the cohomology defined in the example
    of Subsection \ref{subsec:example}. In that case we have that the  coboundary map $\hat{\delta}^0 \colon \tilde{F}^0(\E,P^*D) \rightarrow \tilde{F}^1(\E,P^*D)$ is defined by:
    $$
    \hat{\delta}^0(c_1,c_2,d_1,d_2)=(c_1-d_1,c_2-d_2,[c_1+d_2],[c_2+d_1]),
    $$ where we take an element $f \in F^1(\E ; P^*D)$ to be of the form $$(f(\alpha_1),f(\alpha_2),f(\beta_1),f(\beta_2)).$$
    
    The morphism $P^* \colon \tilde F^1(\Si;D) \rightarrow \tilde F^1(\E ; P^*D)$ is defined by $$P^*(\alpha,\beta)=(\alpha,\alpha,[\beta],[\beta]).$$ So if we take the generator $\{(1,[0])\}$ of $\Ho^1(\Si;D)$ we have that $$P^*(1,[0])=(1,1,[0],[0])=\hat{\delta}^0(0,1,-1,0).$$ Hence we can conclude that $\{(1,[0])\} \in \ker P^*$ and $\mathrm{c.p.l} (P)=1$ for the endopairing defined in Example \ref{Svar_ejemplo}.
\end{example}

\begin{example}\label{ProjectivePlane}
    Let $\mathcal{P}2$ be the category generated by the diagram:
    $$
\begin{tikzcd}
X \arrow[r, "\alpha_1", bend left=49] \arrow[r, "\alpha_2"', bend right=49] \arrow[rr, "\gamma_1", bend left=75] \arrow[rr, "\gamma_2"', bend right=75] & Y \arrow[r, "\beta_2"', bend right=49] \arrow[r, "\beta_1", bend left=49] & Z
\end{tikzcd}
    $$
    where $\beta_1 \circ \alpha_1= \gamma_1=\beta_2 \circ \alpha_2$ and $\beta_1 \circ \alpha_2=\gamma_2=\beta_2 \circ \alpha_1$. Let $\E$ be the poset:
     
    \adjustbox{scale=1.3,center}{%
\begin{tikzcd}
X_1 \arrow[d, "\alpha_1^1"'] \arrow[rd, "\alpha_2^1" description, bend left] & X_2 \arrow[ld, "\alpha_1^2" description, bend right] \arrow[d, "\alpha_2^2"] \\
Y_1 \arrow[d, "\beta_1^1"'] \arrow[rd, "\beta_2^1" description, bend left] & Y_2 \arrow[ld, "\beta_1^2" description, bend right] \arrow[d, "\beta_2^2"] \\
Z_1                                                                & Z_2                                                               
\end{tikzcd}
}

    where we denote the morphism between $X_i$ and $Z_j$ by $\gamma_j^i$.
    The functor $P \colon \E \rightarrow \mathcal{P}2$ defined by $P(A_i)=A$ for every object, $P(\alpha_j^i)=\alpha_j$, $P(\beta_j^i)=\beta_i$ and $P(\gamma_j^i)=\gamma_j$ is a covering and therefore a bifibration.

    Let $\mathbb{F}_2$ be the constant functor that takes everything to the finite field with $2$ elements and the pairing given by multiplication. We have that the reduced cochain complex associated to $(\mathcal{P}2,\mathbb{F}_2)$ is equivalent to: $$\mathbb{F}_2^3\rightarrow\mathbb{F}_2^6\rightarrow\mathbb{F}_2^4\rightarrow0\rightarrow\dots$$ where the morphisms are $$\delta^1(x,y,z)=(x+y,x+y,y+z,y+z,x+z,x+z)$$ and $$\delta^2(\alpha_1,\alpha_2,\beta_1,\beta_2,\gamma_1,\gamma_2)=(\alpha_1+\beta_1+\gamma_1,\alpha_2+\beta_2+\gamma_1,\alpha_2+\beta_1+\gamma_2,\alpha_1+\beta_2+\gamma_2).$$

   It follows that $\Ho^1(\mathcal{P}2;\mathbb{F}_2)\cong \mathbb{F}_2$ and $\Ho^2(\mathcal{P}2;\mathbb{F}_2)\cong \mathbb{F}_2$. Moreover, the generator $f$ of $\Ho^1(\mathcal{P}2;\mathbb{F}_2)$ is given by $f=(1,0,0,1,1,0)$ since $\delta^2(\alpha)=0$ and  $f\notin \Ima \delta^1$. Furthermore, 
   \begin{align*}
        f\smile f&=(1,0,0,1,1,0)\smile(1,0,0,1,1,0)\\&=(1\cdot0,0\cdot1,0\cdot1,1\cdot1)\neq0
    \end{align*}
    and since $(0,0,0,1) \notin \Ima \delta^2$ we have that $\{f\} \smile \{f\}\neq 0.$

    Let us compute the cohomology of $\mathcal{E}$ with coefficients in $\mathbb{F}_2$. We have that the reduced cochain complex is equivalent to: $$\mathbb{F}_2^6\rightarrow\mathbb{F}_2^{12}\rightarrow\mathbb{F}_2^8\rightarrow0\rightarrow\dots$$ where the morphisms are \begin{align*}
    \delta^1(x_1,x_2,y_1,y_2,z_1,z_2)=\\(x_1+y_1,x_1+y_2,x_2+y_1,x_2+y_2,y_1+z_1,y_1+z_2,\\y_2+z_1,y_2+z_2,x_1+z_1,x_1+z_2,x_2+z_1,x_2+z_2)
    \end{align*} and \begin{align*}\delta^2(\alpha_1^1,\alpha_2^1,\alpha^2_1,\alpha^2_2,\beta_1^1,\beta^1_2,\beta_1^2,\beta_2^2,\gamma_1^1,\gamma_2^1,\gamma_1^2,\gamma_2^2)=\\(\alpha_1^1+\beta_1^1+\gamma_1^1,\alpha_1^1+\beta_2^1+\gamma^1_2,\alpha_2^1+\beta_1^2+\gamma^1_1,\alpha_2^1+\beta_2^2+\gamma^1_2,\\
    \alpha_1^2+\beta_1^1+\gamma^2_1,\alpha_1^2+\beta_2^1+\gamma^2_2,\alpha_2^2+\beta_1^2+\gamma^2_1,\alpha_2^2+\beta_2^2+\gamma^2_2)
    \end{align*}
One can check that $\Ho^1(\mathcal{E};\mathbb{F}_2)=0$ so the generator $\alpha$ of $\Ho^1(\mathcal{P}2;\mathbb{F}_2)$ is in $\ker P^*$ and $\mathrm{c.p.l}(P)=2$ because in this case $\{\alpha\} \smile \{\alpha\} \neq 0.$ 
    
    Using the inequality of Theorem \ref{thm:main_thm} we know that we need at least $3$ subcategories in order to cover $\mathcal{P}2$ with a geometric cover with the property of having a local section. In this case, the inequality is strict since one can check that we need $4$. Indeed, suppose that we had only three subcategories. Since the cover is geometric, by the pigeonhole principle there is at least one subcategory that has two $2$-chains, i.e., it contains one of the following subcategories:
    $$
\begin{tikzcd}
X \arrow[r, "\alpha_i"] & Y \arrow[r, "\beta_2"', bend right=49] \arrow[r, "\beta_1", bend left=49] & Z & X \arrow[r, "\alpha_1", bend left=49] \arrow[r, "\alpha_2"', bend right=49] & Y \arrow[r, "\beta_i"] & Z
\end{tikzcd}
    $$
    In both cases we can prove that it has no local section. We will see this for the case on the right since the left one is analogous. Let $S \colon \Uc \rightarrow \E $ be a local section. The image of the object $X$ would be $S(X)=X_i$. In turn, $\alpha_1$ and $\alpha_2$ would have to be lifted to maps $S(\alpha_1) \colon X_i \rightarrow S(Y)$ and $S(\alpha_i) \colon X_i \rightarrow S(Y)$. Since $P \circ S(\alpha_i)=\alpha_i$ then $S(\alpha_1)\neq S(\alpha_2)$ and they terminate in different objects but this is a contradiction since $S(Y)$ is the same object.
\end{example}

\bibliographystyle{plain}
\bibliography{biblio_svarc}



\end{document}